\documentclass{amsart}
%\loadufm
\usepackage{amsmath,amssymb,amsthm,amsfonts,epsfig,verbatim}
\input{xy}                                                                      
\xyoption{all}
\CompileMatrices 

\newcommand{\com}[1]{
%{{\bf #1}}\\
}

\newcommand{\bp}{\begin{pmatrix}}
\newcommand{\ep}{\end{pmatrix}}
\newcommand{\be}{\begin{equation}}
\newcommand{\ee}{\end{equation}}
\newcommand{\bs}{\begin{split}}
\newcommand{\es}{\end{split}}
\newcommand{\bc}{\begin{center}}
\newcommand{\ec}{\end{center}}
\newcommand{\ed}{{\rm d}}
\newcommand{\w}{{\mathchoice{\,{\scriptstyle\wedge}\,}{{\scriptstyle\wedge}}    
      {{\scriptscriptstyle\wedge}}{{\scriptscriptstyle\wedge}}}}                
\newcommand{\lhk}{\mathbin{\hbox{\vrule height1.4pt width4pt depth-1pt          
             \vrule height4pt width0.4pt depth-1pt}}}  
\newcommand{\ol}{\overline}
\newcommand{\liealgebra}[1]{{\mathfrak {#1}}}
\newcommand{\g}{\liealgebra{g}}
\newcommand{\ka}{\liealgebra{k}}
\newcommand{\gl}{\liealgebra{gl}}
\newcommand{\sla}{\liealgebra{sl}}

\newcommand{\su}{\liealgebra{su}}

\newcommand{\liegroup}[1]{{\operatorname{#1}}}
\newcommand{\G}{\liegroup{G}}
\newcommand{\K}{\liegroup{K}}

\newcommand{\SL}{\liegroup{SL}}
\newcommand{\SO}{\liegroup{SO}}
\newcommand{\SU}{\liegroup{SU}}

\renewcommand{\Im}{\operatorname{Im}}
\newcommand{\mo}{\sqrt{-1}}

\providecommand{\Imag}{\mathop{\rm Im}\nolimits}
\newcommand{\R}{\mathbb R}
\newcommand{\C}{\mathbb C}

\newcommand{\Z}{\mathbb Z}
\newcommand{\N}{\mathbb N}

\newcommand{\s}[1]{{\mathbb S}^{#1}}

\newcommand{\mcc}{\mathcal C}
\newcommand{\mcd}{\mathcal D}
\newcommand{\mce}{\mathcal E}

\newcommand{\mci}{\mathcal I}

\newcommand{\mcl}{\mathcal L}

\newcommand{\mcn}{\mathcal N}

\newcommand{\mcp}{\mathcal P}

\newcommand{\mct}{\mathcal T}

\newcommand{\p}{\varphi}
\newcommand{\pt}{\tilde{\varphi}}
\newcommand{\ph}{\hat \varphi}

\newcommand{\del}{{\partial}}
\newcommand{\delb}{\bar{\partial}}

\newcommand{\ns}{\negthinspace}
\newcommand{\trp}{\; {}^t \ns}

\newcommand{\M}[1]{M^{(#1)}}
\providecommand{\mcip}[1]{{\mci}^{(#1)}}

\providecommand{\wt}{{\rm {wd}}}

\newcommand{\dd}[2]{\frac{\partial {#1}}{\partial {#2}}}

\newcommand{\om}{\omega}

\newcommand{\Om}{\Omega}

\newcommand{\I}[1]{{\rm I}^{(#1)}}
\newcommand{\zb}{\ol{z}}

\newcommand{\ub}{\ol{u}}

\newcommand{\xib}{\ol{\xi}}

\newcommand{\zetab}{\ol{\zeta}}

\newcommand{\bh}{\hat{b}}

\hyphenation{co-ass-o-cia-tive}
\hyphenation{sub-man-i-fold}
\hyphenation{ass-o-cia-tive}
\hyphenation{man-i-fold}

\usepackage[mathscr]{eucal}
\usepackage{bbm}
\usepackage{oldgerm}

\theoremstyle{plain}
\newtheorem{theorem}{Theorem}[section]
\newtheorem{lemma}[theorem]{Lemma}

\newtheorem{proposition}[theorem]{Proposition}

\newtheorem{rem}[theorem]{Remark}
\newtheorem{definition}{Definition}[section]

\newtheorem{example}[theorem]{Example}

\begin{document}
\title[Killing fields and conservation laws ]{Killing fields and conservation laws for rank-one Toda field equations}
\author{Daniel Fox}
\address{Daniel Fox, W2-7 Department of Mathematics, 1700 Spring Garden Street, Community College of Philadelphia, Philadelphia, PA 19130}
\email{foxdanie@gmail.com}
\subjclass[2000]{Primary 53,35}
\date{\today}
\thanks{I would like to thank  Fran Burstall, Chuu-Lian Terng, Dominic Joyce, and Oliver Goertsches for useful discussions on this topic and Emma Carberry and Joe S. Wang for important comments on an earlier draft.  }

\begin{abstract}
We present a connection between the Killing fields that arise in the loop-group approach to integrable systems and conservation laws viewed as elements of the characteristic cohomology.  We use the connection to generate the complete set of conservation laws (as elements of the characteristic cohomology) for the Tzitzeica equation, completing the work in \cite{Fox2011}.  

We define a notion of \emph{finite-type} for integral manifolds of exterior differential systems directly in terms of conservation laws that generalizes the definition of Pinkall-Sterling \cite{Pinkall1989}.  The definition applies to any exterior differential system that has infinitely many conservation laws possessing a normal form.   

Finally, we show that, for the rank-one Toda field equations, every characteristic cohomology class has a translation invariant representative as an undifferentiated conservation law.  Therefore the characteristic cohomology defines de Rham cohomology classes on doubly periodic solutions.  
\end{abstract}
\keywords{characteristic cohomology, conservation laws, exterior differential systems, integrable systems, Killing field, Toda field, primitive map}
\maketitle

%-----------------------------------------------------------------------------%
\tableofcontents

%-----------------------------------------------------------------------------%
%\listoftables   %if you have any tables

%-----------------------------------------------------------------------------%
%\listoffigures  %if you have any figures

%-----------------------------------------------------------------------------%
%
% MAIN BODY OF PAPER
%
% replace TEXT in \chapter{TEXT} with actual chapter name
% replace FILE in \input{FILE} with name of text file
%   containing the given chapter, eg. for the introduction one could
%   have FILE = intro IF stored in intro.tex (.tex extension is assumed!).
%
%-----------------------------------------------------------------------------%

\section{Introduction}
There are various approaches to conservation laws of differential equations (for example, see \cite{Bryant1995,Shadwick1981,Terng2005,Sanders2001}).  This article attempts to further the theory of characteristic cohomology developed by Bryant and Griffiths \cite{Bryant1995}, continuing the work in \cite{Fox2011}.  The characteristic cohomology approach emphasizes 1)the cohomological nature of conservation laws, 2)a universal perspective, and 3)geometric representatives similar to harmonic representatives of de Rham classes.   

In this article we establish a connection between Killing fields \cite{Burstall1993,Burstall1995} and the characteristic cohomology \cite{Bryant1995} of Toda field equations \cite{Bolton1995} and use it to completely determine the characteristic cohomology of the Tzitzeica equation (which corresponds to the Toda field equation for $\SU(3)/\SO(2)$).  We expect the method applied to $\SU(3)/\SO(2)$ to generalize to all Toda field equations and even to all primitive map systems, though we have not attempted this. 
   
Terng and Wang \cite{Terng2005} have made a similar connection between conservation laws and the analogs of Killing fields for the (hyperbolic) $U/K$-systems, though they do not concern themselves with the universal approach and so do not need the vanishing result of Thm~\ref{thm:MainTheorem}.  There is also a well developed theory of recursion operators  \cite{Olver1977,Sanders2001,Wang2002} that generates conservation laws for many integrable systems.  There is significant overlap between the various approaches.  The recursion $\mcp$ that we introduce in Sec.~\ref{sec:tzitzeica} should be equivalent to the recursion operator given in Sec.~2.16 of \cite{Wang2002}, though the derivation is independent.     

Let us now explain what we mean by the universal perspective.  The characteristic cohomology is a geometric analog of the topological characteristic classes, which we recall now in order to make the analogy plain.  Associated to every smooth manifold $X$ one has its cohomology $H^p(X,\Z)$, which is a measure of the complexity of the topology of $X$.  The extra structure of a complex vector bundle 
\be
\C^r \to E \to X
\ee 
is equivalent to a smooth map $\phi_E:X \to Gr(r,\infty)$ to the infinite Grassmanian \cite{Hatcher}.  The cohomology of $Gr(r,\infty)$ is generated by the Chern classes of the universal bundle 
\be
\C^r \to U \to Gr(r,\infty),
\ee
that is $H^*(Gr(r,\infty),\Z)=\Z[c_1,\ldots,c_r]$ as rings.  Therefore each complex vector bundle $E \to X$ defines a set of {\bf characteristic (cohomology) classes}
\be
\phi_E^*(c_i) \in H^{2i}(X,\Z).
\ee
The $\phi_E^*(c_i)$ vanish if and only if $E \to X$ is a trivial bundle.  Thus the characteristic classes measure the degree of twisting of the vector bundle.  One can also define characteristic classes in terms of connections or other direct approaches  that do not involve the classifying space $Gr(r,\infty)$ \cite{Milnor1974,Bott1982}.  

The characteristic cohomology is an approach to conservation laws of PDE analogous to the universal approach to characteristic classes.  To each exterior differential system $(M,\mci)$ is associated its characteristic cohomology $H^*(M,\Om/\mci)$ \cite{Bryant1995}.  Recall that an integral manifold is an immersed submanifold $\phi:N \to M$ such that $\phi^*(\mci)=0$.  Therefore the characteristic cohomolgy pulls back to any integral manifold to define de Rham cohomology classes of $N$,
\[
\phi^*: \bar{H}^p \to H^p_{dR}.
\]  
These classes should measure how complicated the immersion $\phi:N \to (M,\mci)$ is, \emph{considered as an integral manifold}.  In order to access all of the characteristic cohomology, one must work on the infinite prolongation $(\M{\infty},\I{\infty})$. The analogy is then between $H^*(\M{\infty},\Om^*(\M{\infty})/\mci^{(\infty)})$ and $H^*(Gr(r,\infty),\Z)$.  In the context of variations of Hodge structure this analogy finds a more definite connection \cite{Carlson2009} and raises the question, to what extent can this analogy be made concrete for systems other than those arising in the variation of Hodge structure?

Once one has conservation laws, one would like to use them to better understand the geometric and analytic properties of solutions.  Pinkall and Sterling \cite{Pinkall1989} introduce a notion of finite-type solutions for CMC surfaces in $\R^3$ which was generalized in \cite{Burstall1993}.  We formulate a finite-type condition directly in terms of  conservation laws.  It is equivalent to the Pinkall-Sterling condition for that system. In addition to determining the higher-order characteristic cohomology of the scalar nonlinear Poisson equations, we also present translation invariant representatives of the cohomology classes.  These induce de Rham classes on doubly periodic solutions and should encode significant geometric information about the solution. 

Although we only present results about the Tzitzeica equation in this article ($f_{uu}=\alpha f_u+2\alpha^2 f$), the simpler case $f_{uu}=\beta f$ was carried out by essentially the same method in \cite{Fox2011}, though we hid the role of Killing fields.  These two scalar PDE arise from the Toda field equations for primitive maps to the $6$-symmetric space $\SU(3)/\SO(2)$ and the ($2$-) symmetric space $\SU(2)/\SO(2)$, respectively.  There is one other $k$-symmetric space of rank one, namely, the $4$-symmetric space $\SO(4)/\SO(2)$.  It provides a recursion of order four for the case $f_{uu}=\beta f$ that is essentially the square of the order two recursion that one obtains using the ($2$)-symmetric space $\SU(2)/\SO(2)$.  We have not calculated any examples for $k$-symmetric spaces that don't have $\SO(2)$ as their stabilizer.

%%%%%%%%%%%%%%%%%%%%%%%%%%%%%%%%%%%%%%%%%%%%%%%%%%%%%%%%%%%%%%%%%%%

\section{The scalar nonlinear Poisson equation as an EDS}
The PDE to be studied is
\begin{equation}\label{eq:fGordon}
\frac{{\partial}^2 u}{\partial z \partial \bar{z}}  = -f(u),
\end{equation}
where $u:\C \to \R$. We encode the PDE as an exterior differential system (EDS) with independence condition. For a basic introduction to EDS see \cite{Bryant1991} or \cite{Ivey2003}. Let $M=J^1(\C,\R)=\C \times \R \times \C$ be the first jet space of maps from $\C$ to $\R$, with coordinates $(z,u,u_0)$ and define the differential forms
\begin{align*}
\zeta&=\ed z\\ 
\omega_1&=\ed u_0 +f \bar{\zeta}\\
\eta_0&=\ed u - u_0 \zeta - \bar{u}_0 \bar{\zeta}\\
\Psi&=\Imag(\zeta \w \omega_1)=-\frac{\mo}{2}(\zeta \w \omega_1-\bar{\zeta} \w \bar{\omega}_1).
\end{align*}
The relevant differential ideal is 
\[\mci=\left\langle \eta_0, \psi  \right\rangle =\langle \eta_0, \zeta \w \omega_1 \rangle.\]
Thus the EDS to be studied is 
\begin{equation}\label{eq:EDS}
(M,\mci),\;\;{\rm where}\;\; M=\C \times \R \times \C \;\; {\rm and}\;\; \mci=\langle \eta_0, \psi \rangle.
\end{equation}
This EDS is involutive with Cartan characters $s_0=1,s_1=2,s_2=0$.  Again, see \cite{Bryant1991,Ivey2003} for the basics of EDS.

As in \cite{Fox2011}, we will work on the infinite prolongation $(\M{\infty},\I{\infty})$.  The structure equations on $\M{\infty}$ are given in terms of the one-forms and functions 
\begin{center}
\begin{tabular}{ll}
 &$\zeta=\ed z$\\
$T^0=f$&$\eta_0=\ed u -u_0\zeta-\bar{u}_0\bar{\zeta}$ \\
$T^{i+1}=\sum_{j=0}^{i}{i \choose j}u_{i-j}T^j_u $&$ \eta_{i+1}=\ed u_{i}+T^{i}\bar{\zeta}-u_{i+1} \zeta$\\
 &$\tau^i=\sum_{j=0}^{i}{i \choose j} T^j_u \eta_{i-j}$,\\
\end{tabular}
\end{center}
for $i \geq 0$.

The real and imaginary parts of $\zeta,\eta_0,\eta_{1},\eta_{2},\ldots$ form a coframe of $\M{\infty}$ and the subbundle
\[
\I{\infty}=\{\eta_0,\eta_{1},\bar{\eta}_{1},\ldots \} \subset \Omega^1(\M{\infty},\C)
\]
generates the (formally Frobenius) differential ideal $\mci^{(\infty)}$. The vector fields on $\M{\infty}$ dual to $\zeta, \eta_0, \eta_i$ are
\begin{align}
&e_{-1}=\dd{\;}{z}+u_0\dd{\;}{u}+\sum_{i=0}^{\infty} u_{i+1}  \dd{\;}{u_{i}}-\sum_{i=0}^{\infty}\overline{T}^{i}\dd{\;}{\bar{u}_{i}}\nonumber \\
&e_0= \dd{\;}{u} \label{eq:vectorfields}\\
&e_i=\dd{\;}{u_{i-1}} \;\;\; i=1 \ldots  \nonumber 
\end{align}

\begin{proposition}
For $i \geq 1$ the following structure equations are satisfied on $\M{\infty}$\upshape{:}
\begin{align*}
\ed T^i &\equiv  T^{i+1}\zeta+ \tau^i  \;\;\; {\rm mod}\; \bar{\zeta}\\
\ed \zeta &=0\\
\ed \eta_0 &=\zeta \w \eta_1 + \bar{\zeta} \w \bar{\eta}_1\\
\ed \eta_{i}&=- \eta_{i+1} \w \zeta+\tau^{i-1} \w \bar{\zeta}
\end{align*}
\end{proposition}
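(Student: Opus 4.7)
The plan is to verify each of the four equations in turn. The first, $\ed\zeta=0$, is trivial since $\zeta=\ed z$. For $\ed\eta_0$, I differentiate the defining expression $\eta_0=\ed u-u_0\zeta-\bar{u}_0\bar\zeta$ to get $\ed\eta_0=-\ed u_0\w\zeta-\ed\bar{u}_0\w\bar\zeta$, then substitute $\ed u_0=\eta_1-T^0\bar\zeta+u_1\zeta$ (the defining relation for $\eta_1$ solved for $\ed u_0$) and its complex conjugate. The residual terms collect into $(\overline{T^0}-T^0)\zeta\w\bar\zeta$, which vanishes because $T^0=f(u)$ is real-valued.

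The equations for $\ed\eta_i$ with $i\geq 1$ reduce formally to the $\ed T^{i-1}$ formula. Starting from $\eta_i=\ed u_{i-1}+T^{i-1}\bar\zeta-u_i\zeta$ one obtains $\ed\eta_i=\ed T^{i-1}\w\bar\zeta-\ed u_i\w\zeta$. Substituting $\ed u_i=\eta_{i+1}-T^i\bar\zeta+u_{i+1}\zeta$ and $\ed T^{i-1}\equiv T^i\zeta+\tau^{i-1}\pmod{\bar\zeta}$, the two $T^i$ cross-terms $T^i\zeta\w\bar\zeta$ and $T^i\bar\zeta\w\zeta$ cancel, leaving precisely $-\eta_{i+1}\w\zeta+\tau^{i-1}\w\bar\zeta$.

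So the real content is the first equation, which I prove by induction on $i$. The key structural observation is that the recursive definition of $T^i$ makes it a function only of $u,u_0,\ldots,u_{i-1}$, independent of $z,\bar z$ and of all $\bar u_k$. Expanding $\ed T^i=T^i_u\,\ed u+\sum_{k=0}^{i-1}T^i_{u_k}\,\ed u_k$ and reducing modulo $\bar\zeta$ via $\ed u\equiv\eta_0+u_0\zeta$ and $\ed u_k\equiv\eta_{k+1}+u_{k+1}\zeta$, one obtains
\[
\ed T^i\equiv e_{-1}(T^i)\,\zeta+\sum_{m=0}^{i}T^i_{u_{m-1}}\,\eta_m\pmod{\bar\zeta},
\]
where $u_{-1}:=u$. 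Matching this with $T^{i+1}\zeta+\tau^i$ requires two combinatorial identities, namely $T^{i+1}=e_{-1}(T^i)$ and $T^i_{u_{m-1}}=\binom{i}{m}T^{i-m}_u$. Both follow by induction on $i$ from the defining sum $T^{i+1}=\sum_{j=0}^i\binom{i}{j}u_{i-j}T^j_u$ together with Pascal's rule $\binom{i}{m-1}+\binom{i}{m}=\binom{i+1}{m}$ and the subset-of-subset identity $\binom{i}{k+m}\binom{k+m}{m}=\binom{i}{m}\binom{i-m}{k}$.

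The main obstacle is simply the combinatorial bookkeeping in this last step; conceptually everything is transparent once one recognizes that $e_{-1}$ represents the total $z$-derivative on $\M{\infty}$ and that $T^i=e_{-1}^i(f)$, so that $\ed T^i$ modulo $\bar\zeta$ must split naturally into its $z$-directional part $T^{i+1}\zeta$ plus a sum of contact corrections indexed by $\eta_0,\ldots,\eta_i$.
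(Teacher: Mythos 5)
Your proof is correct and complete; the paper states this proposition without proof, treating it as a routine verification from the definitions, and your computation --- reducing everything to the identities $T^{i+1}=e_{-1}(T^i)$ and $T^i_{u_{m-1}}=\binom{i}{m}T^{i-m}_u$, proved by induction via Pascal's rule and the identity $\binom{i}{k+m}\binom{k+m}{m}=\binom{i}{m}\binom{i-m}{k}$ --- is exactly the verification the paper leaves implicit. The only point worth flagging is that the $\ed\eta_1$ equation requires the $\ed T^i$ formula at $i=0$ even though the proposition is stated for $i\geq 1$, but your induction base ($T^0=f$, empty sum over $k$) covers this.
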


The PDE Equation \eqref{eq:fGordon} is invariant under the $\s{1}$--action $(u,z,\bar{z}) \to (u,\lambda z,\overline{\lambda} \bar{z})$ (with $\lambda \in \C$ and $|\lambda|=1$). This leads to a symmetry of $(\M{k},\mci^{(k)})$, which yields a decomposition of differential forms and thus conservation laws.   To see this, let $F:\s{1} \times \M{k} \to \M{k}$ be defined as 
\begin{equation}\label{eq:Symmetry}
F(\lambda,u,z,u_j)=(u,\lambda^{-1} z,\lambda^{j+1}u_j).
\end{equation}
This induces the following weighting system:
\begin{center}
\begin{tabular}{ll}\label{eq:weights}
$\wt(z)=-1$&$ \wt(\bar{z})=1$\\
$\wt(u_j)=+(j+1)$&$ \wt(\bar{u}_j)=-(j+1)$\\
$\wt(u)=0$ & $\wt(\eta_0)=0$\\
$\wt(\zeta)=-1$ & $\wt(\bar{\zeta})=1$ \\
 $\wt(\eta_j)=+j $&$ \wt(\bar{\eta}_j)=-j$.
\end{tabular}
\end{center}
For $p \geq 0$ and $j \in \Z$ we define the spaces of differential forms of {\bf homogeneous weighted degree} $j$ to be
\begin{equation}\label{eq:wdegforms}
\Omega^p_j(\M{k}) =\left\{\varphi \in \Omega^p(\M{k},\C) \mid F^*\varphi=\lambda^j \varphi \right\}.
\end{equation}
For an element $\varphi \in \Omega^p_j(\M{k},\C)$, we write $\wt(\varphi)=j$. Note that $\wt(\varphi)=-\wt(\overline{\varphi})$. This grading is preserved by exterior differentiation:  
\[
\ed: \Omega^p_j(\M{k}) \to \Omega^{p+1}_j(\M{k}).
\]

The ellipticity of Equation \eqref{eq:fGordon} leads us to the following:
\begin{definition}
Define the subspaces 
\[\Omega^{(1,0)}(\M{\infty})=\C \cdot \{\zeta,\eta_1,\ldots \} \;\text{and}\;\; \Omega^{(0,1)}(\M{\infty})=\C \cdot \{\bar{\zeta},\bar{\eta}_1,\ldots \}
\]
and the operators 
\[
\partial:C^{\infty}(\M{\infty},\C) \to \Omega^{(1,0)}(\M{\infty})\;  \text{and} \;\;\overline{\partial}:C^{\infty}(\M{\infty},\C) \to \Omega^{(0,1)}(\M{\infty})
\]
by the condition
\begin{align*}
\partial A&=e_{-1}(A)\zeta+\sum_{i=1}^{\infty}A_{u_{i-1}} \eta_i \\
\overline{\partial} A&=e_{\overline{-1}}(A)\bar{\zeta}+\sum_{i=1}^{\infty}A_{u_{i-1}} \bar{\eta}_i ,
\end{align*}
It will be convenient to use the linear operator 
\[
J:\Omega^{1}(\M{\infty}) \to \Omega^{1}(\M{\infty}),
\]
which acts by $\mo$ on $\Omega^{(1,0)}(\M{\infty})$, by $-\mo$ on  $\Omega^{(0,1)}(\M{\infty})$, and as the identity on $\R \cdot \eta_0$. This is an almost complex structure on the annihilator of $e_0$. 
\end{definition}

We say that $f$ satisfies an $n^{th}$-order autonomous linear ODE if it satisfies an equation of the form
\begin{equation}\label{eq:linearfirstorderode}
\frac{d^{n}f}{du^{n}}=Z(f,\frac{df}{du},\frac{d^{2}f}{du^{2}},\ldots,\frac{d^{n-1}f}{du^{n-1}})
\end{equation}
where $Z$ is an $\R$-linear function of $n$ variables.  From now on assume that\\
\begin{center}
{{\bf $f$ does not satisfy any first-order linear autonomous ODE.}}
\end{center}
\vspace{.5cm}
By imposing this assumption we rule out the important system $f_u=f$ corresponding to the Liouville equation $u_{z \zb}=e^u$, and the linear systems $f(u)=au+b$ for $a,b \in \R$.  These systems all have infinitely many classical conservation laws and so the machinery we develop is not needed for them.

%%%%%%%%%%%%%%%%%%%%%%%%%%%%%%%%%%%%%%%%%%%%%%%%
\section{The characteristic cohomology}
On $(\M{\infty},\mci^{(\infty)})$ the associated characteristic cohomology is 
\[\bar{H}^p:=H^p(\M{\infty},\Omega/\mci),\]
 that is, the cohomology of the quotient complex $(\M{\infty},\Omega/\mci)$.   We will only deal with the local case which, by definition, means that $H^p(\M{k},\R)=0$ for $k \geq 0$ and $p>0$.
\begin{definition}
The space of {\bf higher-order undifferentiated conservation laws} for Eq.~\eqref{eq:EDS} is 
\[
\bar{H}^{1}:=H^{1}(\M{\infty},\Omega/\mci).\]
The space of {\bf higher-order undifferentiated \emph{complex} conservation laws} for $(M,\mci)$ is 
\[\bar{H}^{1}_{\C}:=H^{1}(\M{\infty},\Omega_\C/\mci_\C),\] where the subscript $\C$ denotes complexification. 
\end{definition}

The theory developed by Bryant and Griffiths involves studying conservation laws locally via the isomorphism $H^{1}(\M{k},\Omega/\mci) \cong H^{2}(\M{k},\mcip{k})$, since locally $H^p(\M{k},\R)=0$  for $p>0$.   
\begin{definition}
The space of {\bf higher-order differentiated conservation laws} for $(M,\mci)$ is $H^{2}(\M{\infty},\mci)$. The space of {\bf higher-order differentiated { complex} conservation laws} for $(M,\mci)$ is $H^{2}(\M{\infty},\mci_{\C})$.
\end{definition}
Exterior differentiation provides isomorphisms
\begin{align*}
 \ed:\bar{H}^{1}& \overset{\cong}{\to} H^{2}(\M{\infty},\mci)\\
 \ed:\bar{H}_{\C}^{1}& \overset{\cong}{\to} H^{2}(\M{\infty},\mci_{\C})
\end{align*} 
in the local involutive case.   We have stated this material for the system being studied in this article.  In general, one is not working with forms of degree one.  See \cite{Fox2011} for a summary of the relevant results in \cite{Bryant1991} that are being applied here.

Let $\mcc$ denote the space of differentiated conservation laws in {\it normal form} (see \cite{Fox2011,Bryant1995}).  For the system at hand, $\Phi \in \mcc$ if and only if      
\begin{equation}\label{eq:NormalForm}
\Phi  =\eta_0 \w \rho + A \psi +\sum_{1\leq i < j \leq k } \left( B^{ij}\eta_i \w \eta_j +\overline{B}^{ij}\bar{\eta}_i \w \bar{\eta}_j\right)
\end{equation}
for some $k.$  The one--form $\rho$ and the function $B$ are determined by $A$ via the formulas
\begin{align}
\rho &= -\frac{1}{2} J\ed A  \label{eq:rhobyA}\\
B^{ij}&=\mo \sum_{m=0}^{k-j-i+1}(-1)^{m-i+1}{m+i-1 \choose i-1}(e_{-1})^{m}A_{u_{m+j+i-1}}\label{eq:BbyA}
\end{align}
if we normalize $\rho$ so that $e_0 \lhk \rho=0$.  The function $A$ on $\M{k}$--which we henceforth refer to as the {\bf generating function} of $\Phi$--satisfies 
\begin{equation}\label{eq:NoMixing}
A_{u_i,\overline{u_j}}=A_u=0
\end{equation} 
and   
\begin{equation}\label{eq:ED}
\mce(A):=e_{\overline{-1}} e_{-1}A+f_u A=0.
\end{equation}

\begin{definition}
A conservation law on $\M{\infty}$ in normal form is said to have {\bf level $k$} if it is defined on $\M{k}$.   Let $\mcc_{(k)}$ denote the space of representatives of conservation laws of level $k$ in normal form.  Let $\mcc_d \subset \mcc$ be the subspace of conservation laws in normal form of weighted degree $d$. 
\end{definition}

By studying the normal form of a conservation law we recover the central equation that must be solved in order to obtain conservation laws: 
\begin{equation}
\mce(A):=e_{\overline{-1}} e_{-1}A+f_u A=0.
\end{equation}
See \cite{Vinogradov1989} or \cite{Bryant2003} for the general argument that conservation laws have generating functions that are solutions of the linearization of the PDE being studied.  Pinkall and Sterling \cite{Pinkall1989} refer to these functions as Jacobi fields.

It is a result of Bryant and Griffiths \cite{Bryant1995} that 
\[
\mcc \cong \bar{H}^{n-l}
\]
which in for the system under study means $\mcc \cong \bar{H}^{1}$.

\begin{definition}
Let $V$ be the space of solutions to $\mce(P)=0$ \upshape{(}Eq.~\eqref{eq:ED}\upshape{)} that also satisfy $P_{u_i,\overline{u}_j}=P_u=0$.  Let $V_d \subset V$ be the subspace of homogeneous solutions of weighted degree $d$. 
\end{definition}

The two solutions of Eq.~\eqref{eq:ED} leading to classical conservation laws are $q=zu_0-\zb \ub_0 \in V_0$ and $u_0 \in V_1$.  The following result is proven in \cite{Fox2011}.
\begin{theorem}\label{thm:MainTheorem}
Suppose that $f$ does not satisfy a linear first-order ODE.  Then\upshape{:}
\begin{enumerate}
\item $V_0$ is spanned by $q$. If $d$ is a nonzero even integer, then $V_d=0$. If $d$ is odd, then $\dim_{\C} V_d\leq 1$.
\item For all $d$ we have isomorphisms
\begin{align*}
V_d &\to  \mcc_d.
\end{align*} 
\item $\dim_{\R}(\mcc_{(2n+1)}/\mcc_{(2n)}) =0$.
\item $\dim_{\R}(\mcc_{(2n+2)}/\mcc_{(2n)}) \leq 2$ with equality if and only if $\dim_{\C}(V_{2n+3})=1$.
\end{enumerate}
\end{theorem}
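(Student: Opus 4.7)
The plan is to organize the proof around three tools: the explicit constraints $A_u = 0$ and $A_{u_i, \overline{u}_j} = 0$ imposed on any generating function by \eqref{eq:NoMixing}, the weighted-degree decomposition coming from the $\s{1}$-symmetry \eqref{eq:Symmetry}, and the Bryant--Griffiths identification that sends $A \in V$ to a normal-form representative $\Phi(A) \in \mcc$ via \eqref{eq:rhobyA} and \eqref{eq:BbyA}. The heart of the argument is part (1); parts (2)--(4) then follow mostly formally. The first step is to split any $A \in V$ uniquely (up to a function of $(z,\bar z)$ alone) as $A = B + C$, where $B$ depends only on $(z,\bar z,u_0,u_1,\ldots)$ and $C$ only on $(z,\bar z,\bar u_0,\bar u_1,\ldots)$. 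Substituting this into \eqref{eq:ED} and using the explicit forms of $e_{-1}$ and $e_{\overline{-1}}$ from \eqref{eq:vectorfields}, I would expand $\mce(A)$ as a polynomial in the top-order jet variables $u_{k+1}$, $\bar u_{k+1}$ and in the $T^i$, $\bar T^i$ (themselves polynomials in lower jets with coefficients built from $f$ and its derivatives), then equate coefficients.

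For part (1), the key step is analyzing the leading-order coefficients in this expansion. Combined with homogeneity of weighted degree $d$, which severely caps the admissible monomials in $B$ and $C$, the coefficient equations form an overdetermined system. For $d = 0$ one recovers $q = zu_0 - \bar z \bar u_0$ as the only solution up to scaling. For nonzero even $d$, a parity obstruction -- arising from the interaction of the $T^i$-contributions in $e_{\overline{-1}}$ with the weighted-degree count, together with the reality structure linking $B$ and $C$ -- forces $V_d = 0$. For odd $d$, the leading coefficient determines the top-jet structure of $B$ up to a single complex scalar, giving $\dim_{\C} V_d \leq 1$. The hypothesis that $f$ satisfies no first-order autonomous linear ODE is used precisely to rule out degenerate leading equations: were $f_u = f$ (the Liouville case) or $f$ affine, the coefficient equations would admit additional solutions that inflate $V_d$.

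For part (2), the normal-form formulas \eqref{eq:rhobyA} and \eqref{eq:BbyA} assemble every $A \in V_d$ into a $\Phi(A) \in \mcc_d$, and conversely every normal-form $\Phi \in \mcc$ is uniquely determined by its generating function $A$; the map $A \mapsto \Phi(A)$ is manifestly weight-preserving, yielding the graded isomorphism $V_d \cong \mcc_d$. For parts (3) and (4) I would observe that $\Phi(A) \in \mcc_{(k)}$ precisely when $A$ is a function on $\M{k}$, and then use weighted homogeneity together with the structure of the generators exhibited in part (1) to pin down the minimum level at which the generator of an odd $V_d$ appears, namely level $d - 1$. Consequently, $\mcc_{(2n+1)}/\mcc_{(2n)}$ could only accept a contribution from $V_{2n+2}$, which vanishes by (1), establishing (3); and $\mcc_{(2n+2)}/\mcc_{(2n)}$ can accept a contribution from $V_{2n+3}$, a space of complex dimension at most one -- hence real dimension at most two -- with equality precisely when $V_{2n+3}$ is nonzero, giving (4).

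The main obstacle is the parity and rigidity analysis in part (1). The expansion of $\mce(A)$ in jet variables is combinatorially intricate, and the precise point at which the hypothesis on $f$ enters to kill spurious solutions requires care: it should surface only after one has extracted a linear recursion in $u$ from the top-order coefficients. Verifying that the generator of odd $V_d$ genuinely requires the top jet variable $u_{d-1}$ -- and is not merely a polynomial expression in lower jets that would place it at a smaller level -- is a separate delicate calculation, and it is what underlies the exact level-matching used in parts (3) and (4).
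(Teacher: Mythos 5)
The paper does not actually prove this theorem in the text: it is quoted from \cite{Fox2011} (``The following result is proven in \cite{Fox2011}''), so there is no in-paper argument to compare yours against line by line. Judged on its own terms, your plan has the right skeleton --- the splitting $A=B+C$ forced by \eqref{eq:NoMixing}, the weight grading coming from the $\s{1}$-action, the Bryant--Griffiths normal form for part (2), and the level-versus-weight bookkeeping for parts (3) and (4) --- and parts (2)--(4) would indeed follow essentially formally once part (1) and the leading-term structure of the generators ($P=u_{d-1}+\cdots$ for $d>0$) are established.

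The gap is that part (1), which you correctly identify as the heart of the matter, is asserted rather than proved. The statements that ``a parity obstruction \ldots forces $V_d=0$'' for nonzero even $d$ and that ``the leading coefficient determines the top-jet structure of $B$ up to a single complex scalar'' for odd $d$ are precisely the content of the theorem; naming an obstruction is not exhibiting one. Note that there is a weight-$d$ monomial $u_{d-1}$ for every $d\geq 1$, even or odd, so no count of admissible monomials by itself kills the even degrees: the vanishing has to come out of the descent through the coefficient equations of $\mce(A)=0$, where at each jet order one must show that the resulting linear system either propagates a single constant downward (odd $d$) or forces a relation such as $c_1 f+c_2 f_u=0$ or $f_{uu}=0$ (even $d$), which is where the hypothesis on $f$ enters. (For instance, for $d=2$ and $A=\alpha u_1+\beta u_0^2$ one computes $\mce(A)=-2\beta f\,u_1-(\alpha f_{uu}+\beta f_u)u_0^2$, whence $\beta=0$ and then $\alpha f_{uu}=0$.) Without carrying out this induction --- including eliminating $z,\bar z$-dependence for $d\neq 0$ and verifying that the generator of $V_d$ genuinely contains $u_{d-1}$, on which your level-matching in parts (3) and (4) depends --- the proposal remains a plan rather than a proof; the inductive leading-term analysis you defer is exactly what \cite{Fox2011} supplies.
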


We also have explicit formulas for representatives of a basis for $\bar{H}^1$.  If $P^i$ spans $V_{2i-1}$ for $i \geq 1$, then 
\be\label{eq:clrep}
\p_i= \frac{\mo}{2(2i-1)}J\left( P \ed q- q \ed P  \right)
\ee
represents a nontrivial class, and in fact
\be\label{eq:clbasis}
\bar{H}_{\C}^1=\C \cdot \{[\p_0],[\p_1],[\ol{\p}_1],[\p_2],[\ol{\p}_2],\ldots \}
\ee
where $[\p_i] \in \bar{H}_{\C}^1$ corresponds to $P^i \in V_{2i-1}$ for $i \geq 1$ under the isomorphism $V \cong \bar{H}_{\C}^1$.

For most potentials $f(u)$, no higher-order conservation laws exist (see \cite{Fox2011,Wang2002,Ziber1979}):
\begin{theorem}\label{thm:SecondOrder}Assume that $f$ does not satisfy any linear second-order ODE, i.e.~that $f,f_u$ and $f_{uu}$ are linearly independent over $\R$. Then $V_d=0$ for $|d|\geq 2$, i.e.~no higher-order conservation laws occur.
\end{theorem}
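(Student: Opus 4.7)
The strategy is to analyze the defining equation $\mce(P) = e_{\overline{-1}} e_{-1} P + f_u P = 0$ directly under the stronger hypothesis and derive $P = 0$ for every $|d| \geq 2$.

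First I would reduce the problem. By part~(1) of Theorem~\ref{thm:MainTheorem}, $V_d = 0$ for every nonzero even $d$, and complex conjugation exchanges $V_d$ with $V_{-d}$, so it suffices to treat odd $d \geq 3$. The conditions $P_u = 0$ and $P_{u_i \bar u_j} = 0$ force the decomposition
\[
P = P_+(z, \bar z, u_0, u_1, \ldots) + P_-(z, \bar z, \bar u_0, \bar u_1, \ldots).
\]
Because each $T^j$ depends only on $u$ and on $u_0, \ldots, u_{j-1}$ while $\overline{T}^j$ depends only on $u$ and on $\bar u_0, \ldots, \bar u_{j-1}$, the function $\mce(P_+)$ involves only $u$ and the $u_i$, and $\mce(P_-)$ only $u$ and the $\bar u_i$. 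Since any $u_i$-dependent term of $\mce(P_+)$ has no counterpart in $\mce(P_-)$ to cancel it, the equation $\mce(P) = 0$ forces every jet-dependent term of $\mce(P_+)$ to vanish identically; by symmetry it then suffices to deal with $P_+$.

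The heart of the argument is a top-jet analysis. Suppose $P_+$ has nontrivial jet dependence and let $k-1$ be the largest index with $P_{+, u_{k-1}} \neq 0$. Expanding
\[
\mce(P_+) = P_{+, z\bar z} + \sum_i u_{i+1} P_{+, u_i \bar z} - \sum_j T^j P_{+, u_j z} - \sum_{i,j} T^j u_{i+1} P_{+, u_i u_j} - \sum_i T^{i+1} P_{+, u_i} + f_u P_+,
\]
and using that no $T^j$ with $j \leq k$ involves $u_k$, the expression is at most linear in the independent coordinate $u_k$. Its $u_k$-coefficient yields the relation
\[
P_{+, u_{k-1} \bar z} - \sum_{j=0}^{k-1} T^j P_{+, u_{k-1} u_j} = 0.
\]
Each $T^j$ is a polynomial in $u_0, \ldots, u_{j-1}$ with coefficients that are $\Z$-linear combinations of monomials in $f, f_u, f_{uu}, \ldots$, so this identity is an $\R$-linear combination of the functions $f^{(m)}(u)$ with jet-polynomial coefficients in $u_0, \ldots, u_{k-1}$ and $z, \bar z$.

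The final step invokes the hypothesis: using that $1, f, f_u, f_{uu}$ are $\R$-linearly independent (which follows from linear independence of $f, f_u, f_{uu}$ together with $f_u \not\equiv 0$), the coefficients of $f$, $f_u$, and $f_{uu}$ above must vanish separately. The $f$-coefficient gives $P_{+, u_{k-1} u_0} = 0$ immediately, the $f_u$-coefficient yields $\sum_{j=1}^{k-1} u_{j-1} P_{+, u_{k-1} u_j} = 0$ as a polynomial identity in the independent jet coordinates, and the $f_{uu}$-coefficient gives a similar relation involving $P_{+, u_{k-1} u_j}$ for $j \geq 2$. Combined with the analogous breakdown of the $u_k$-free part of $\mce(P_+) = 0$, a descending induction on $k$ reduces $P_+$ to the form $A(z, \bar z) + u_0 B(z)$ (linear in $u_0$, with $B$ holomorphic in $z$); the weight constraint $\wt P_+ = d \geq 3$ excludes any monomial in $z, \bar z$ of appropriate weight for $B$, and the symmetric treatment of $P_-$ followed by a final application of $\R$-linear independence of $1, f_u$ to the residual equation $(A + \tilde A)_{z\bar z} + f_u(A + \tilde A) = 0$ forces $P = 0$. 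The main obstacle is the combinatorial bookkeeping in this separation: the higher derivatives $f^{(m)}$ for $m \geq 3$ are not directly constrained by the hypothesis, so one must verify they do not pollute the coefficients of $1, f, f_u, f_{uu}$ being extracted, and track carefully how the weight $d$ restricts the polynomial dependence on jets at each inductive stage.
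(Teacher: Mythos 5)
First, a caveat about the ground truth: this paper does not actually prove Theorem~\ref{thm:SecondOrder} --- it is quoted with the attribution ``see \cite{Fox2011,Wang2002,Ziber1979}'' --- so there is no internal proof to compare against and your argument has to stand on its own. Your setup is sound: the reduction to odd $d\geq 3$ via Theorem~\ref{thm:MainTheorem} and conjugation, the splitting $P=P_++P_-$ forced by $P_u=P_{u_i\bar u_j}=0$, and the extraction of the $u_k$-coefficient relation $P_{+,u_{k-1}\bar z}-\sum_{j=0}^{k-1}T^jP_{+,u_{k-1}u_j}=0$ are all correct and are the natural first moves.

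The gap is exactly the step you flag as the ``main obstacle'' and then do not resolve: you cannot conclude that the coefficients of $f$, $f_u$ and $f_{uu}$ in that identity vanish separately. The identity has the form $c=\sum_{m=0}^{k-1}c_m f^{(m)}(u)$ with $c$ and the $c_m$ independent of $u$, and separating coefficients requires $\R$-linear independence of $1,f,f_u,\dots,f^{(k-1)}$, which is strictly stronger than the stated hypothesis. Concretely, $f(u)=\sin u+1$ satisfies no homogeneous linear second-order ODE, yet $1=f+f_{uu}$; already at $k=3$ your extraction of the $f$-coefficient yields only $P_{+,u_{2}u_0}=u_0^2P_{+,u_2u_2}$ rather than $P_{+,u_{2}u_0}=0$, and the descending induction never starts. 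Similarly $f=e^u+e^{2u}+e^{3u}$ has $f^{(3)}=6f_{uu}-11f_u+6f$, so for $k\geq 4$ the higher derivatives genuinely pollute the $f$, $f_u$, $f_{uu}$ coefficients. The mechanism that actually closes the argument is different in kind: one differentiates the top-coefficient identity repeatedly in $u$ (its left-hand side and the $P$-derivatives are $u$-independent) and in the jet variables, and uses a separation-of-variables/quotient argument --- a ratio of $u$-independent quantities equals a ratio of $u$-dependent ones, hence is constant --- to manufacture a forbidden \emph{homogeneous} relation $f_{uu}=af_u+bf$; only $f,f_u,f_{uu}$ ever need to be compared. (For $k=2$ this runs: $0=f_uP_{u_1u_0}+u_0f_{uu}P_{u_1u_1}$, so either $P_{u_1u_1}=0$ or $f_{uu}/f_u$ is constant.) As written, your proposal proves the theorem only under the unstated stronger hypothesis that all derivatives of $f$ up to the order of the conservation law are independent, so the central case is not handled.
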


\begin{lemma}\label{lem:PSSolutions}
Suppose that $f_{uu}=\beta f$ with $\beta\neq 0$ and that $f$ does not satisfy any first-order ODE.  Then $\dim_{\C}(V_{2n+1})=1$ for all $n \in \Z$.
\end{lemma}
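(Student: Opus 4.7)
The upper bound $\dim_{\C} V_{2n+1} \le 1$ is already part of Theorem~\ref{thm:MainTheorem}(1), so the plan is to produce a nonzero element of $V_{2n+1}$ for every $n \in \Z$. Two such elements are immediate: $u_0 \in V_1$ (one verifies $\mce(u_0) = e_{\overline{-1}}(u_1) + f_u u_0 = -T^1 + f_u u_0 = 0$ using $T^1 = u_0 f_u$) and $\bar{u}_0 \in V_{-1}$ by its conjugate. I would then construct a recursion operator $\mcp\colon V_{2n-1} \to V_{2n+1}$ that maps nonzero elements to nonzero elements, and iterate starting from these two seeds. This is the Pinkall--Sterling idea \cite{Pinkall1989} cast in the loop-group / Killing-field language of \cite{Burstall1993}, specialized to the primitive-map equation for the symmetric space $\SU(2)/\SO(2)$.

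For the construction of $\mcp$ I would first verify directly from \eqref{eq:vectorfields} that $[e_{-1}, e_{\overline{-1}}] = 0$ on $\M{\infty}$. Since $\wt(e_{-1}) = 1$, the iterated operator $e_{-1}^2$ raises weighted degree by $2$ and serves as the leading part of $\mcp$. A short calculation using the commutation above, the hypothesis $f_{uu} = \beta f$, and the equation $\mce(P) = 0$ yields
\[
\mce(e_{-1}^2 P) \;=\; -\beta(f_u u_0^2 + f u_1)\,P \;-\; 2\beta f u_0\, e_{-1}P.
\]
I would then seek a correction $X$, polynomial in $u_0, u_1, \ldots$ (no $u$- or $\bar{u}_j$-dependence, so that $X_u = X_{u_i\bar{u}_j} = 0$) and linear in $P$ and $e_{-1}P$, of weighted degree $d+2$, such that $\mce(X)$ matches the right-hand side; setting $\mcp(P) := e_{-1}^2 P - X$ then produces an element of $V_{d+2}$. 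The $f$- and $f_u$-dependence on the right appears in $\mce(X)$ through the factors $T^j$ introduced by $e_{\overline{-1}}$, and because $f_{uu} = \beta f$ the $\R$-span of $\{1, f, f_u\}$ is closed under $\partial_u$, so matching coefficients of $f$ and $f_u$ reduces the search for $X$ to a finite linear problem, whose solution is guaranteed by the Killing field of the associated primitive map.

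Non-vanishing of $\mcp(P)$ follows by tracking the highest index $k$ with $P_{u_k} \ne 0$: the top-index term $u_{k+2} P_{u_k}$ of $e_{-1}^2 P$ is linear in $u_{k+2}$ and survives any correction $X$ built from $u_j$ with $j \le k+1$, so $\mcp$ strictly raises order. Iterating $\mcp$ on $u_0$ produces nonzero $P^n \in V_{2n+1}$ for $n \ge 1$, and conjugation applied to $\bar{u}_0$ handles the $n \le -1$ case. The main obstacle is the explicit closed-form construction of $X$ and the verification that $\mce(X)$ matches the prescribed right-hand side; this is precisely where $f_{uu} = \beta f$ is indispensable, since Theorem~\ref{thm:SecondOrder} shows that no such $X$ can exist when $f$ satisfies no second-order linear autonomous ODE.
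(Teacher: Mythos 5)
Your high-level strategy is the paper's own: the upper bound is Theorem~\ref{thm:MainTheorem}(1), the seeds are $u_0\in V_1$ and $\ub_0\in V_{-1}$, and the substance is an order-two recursion derived from the formal Killing field for $\SU(2)/\SO(2)$, with injectivity read off from the leading term $u_{k+2}P_{u_k}$. Your formula $\mce(e_{-1}^2P)=-\beta(f_uu_0^2+fu_1)P-2\beta fu_0\,e_{-1}P$ is also correct. The gap is in the form you posit for the recursion. A correction $X$ that is a differential polynomial in the $u_j$ and linear in $P$ and $e_{-1}P$ --- i.e.\ $\mcp(P)=e_{-1}^2P+c_1e_{-1}P+c_0P$ with $c_0,c_1$ weighted-homogeneous of degrees $2$ and $1$ --- does not exist in general. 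Take $f=\sinh u$ (so $\beta=1$). Then $P^2=u_2-\tfrac12u_0^3$ spans $V_3$ and $P^3=u_4-\tfrac52u_0^2u_2-\tfrac52u_0u_1^2+\tfrac38u_0^5$ spans $V_5$, so any recursion with leading term $e_{-1}^2$ must have $X=e_{-1}^2P^2-P^3=u_0^2u_2-\tfrac12u_0u_1^2-\tfrac38u_0^5$. Writing $X=(a_1u_1+a_2u_0^2)P^2+a_3u_0\,e_{-1}P^2$ forces $a_3=0$ (no $u_0u_3$ monomial in $X$) and $a_1=0$ (no $u_1u_2$), and the surviving term $a_2u_0^2P^2=a_2u_0^2u_2-\tfrac{a_2}{2}u_0^5$ can produce neither the $u_0u_1^2$ monomial nor the correct $u_0^5$ coefficient. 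So the recursion is not a local second-order operator of the kind you describe.

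What actually makes the recursion close up --- in Pinkall--Sterling, in \cite{Fox2011}, and in the Tzitzeica analogue of Sec.~\ref{sec:tzitzeica} --- is a potentiation step: from $P^n$ one forms a one-form $\alpha^n$, checks $\ed\alpha^n\equiv0\bmod\I{\infty}$ using the Killing-field equations, and then invokes the vanishing of conservation laws of nonzero even weighted degree (Theorem~\ref{thm:MainTheorem}) to produce a local function $b^n$ with $\ed b^n\equiv\alpha^n\bmod\I{\infty}$; the next generating function is built from $b^n$ by further differentiation. The potential $b^n$ is a differential polynomial but is not given by a universal pointwise formula in $P^n$ and $e_{-1}P^n$, which is exactly what your ansatz misses. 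Relatedly, your appeal to ``the Killing field of the associated primitive map'' to guarantee solvability is circular as stated: the existence of a formal Killing field with polynomial components in every degree is equivalent to the statement being proved, so it must be produced by the recursion (via the closedness of $\alpha^n$ plus the vanishing theorem), not assumed. Finally, one must still verify that the output lies in $V_{d+2}$, i.e.\ satisfies $P_u=P_{u_i\ub_j}=0$ and belongs to $\langle u_j\rangle$; this is the content of the analogues of Lemmas~\ref{lem:btexist} and~\ref{lem:nou}, and your construction builds it in only because the (nonexistent) local $X$ was assumed to have that form.
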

The proof of this used a recursion derived from formal Killing fields.  A version of this recursion was originally given by Pinkall and Sterling \cite{Pinkall1989}, though they did not link the recursion to Killing fields.  Their astounding recursion formulas do not work for the Tzitzeica equation, which led to the search for other means.  In fact, a recursion operator exists for the Tzitzeica equation, and can be found in the list compiled by Wang \cite{Wang2002}. 

In this article we show how Killing fields can be used to prove the analogue of Lemma \ref{lem:PSSolutions} for the Tzitzeica equation.  It is not hard to show that, for there to exist new conservation laws in normal form at the second prolongation,  then $f$ must satisfy $f_{uu}=\beta f$, and for there to exist new conservation laws in normal form at the fourth prolongation, then $f$ must satisfy either $f_{uu}=\beta f$ or $f_{uu}=\alpha f_u + 2 \alpha^2 f$.  This classification result has appeared in \cite{Ziber1979} and elsewhere.

\begin{example} In the case that $f_{uu}=\alpha f_u + 2\alpha^2 f$ with $\alpha\neq 0$ a coordinate change transforms Equation~\eqref{eq:fGordon} into the Tzitzeica equation $u_{z\bar{z}}=e^{-2u}-e^u$.  The first four spaces $V_{2i+1}$ are
\begin{align*}
& V_1 \cong \C,\quad u_0\in V_1\\
&V_3 \cong 0\\
&V_5 \cong \C,\quad u_{{4}}-5\,\alpha\,u_{{2}}u_{{1}}-5\,{\alpha}^{2}u_{{2}}{u_{{0}}}^{2}-
5\,{\alpha}^{2}{u_{{1}}}^{2}u_{{0}}+{u_{{0}}}^{5}{\alpha}^{4}
 \in V_5\\
&V_7 \cong \C,\quad u_{{6}}-7\,\alpha\,u_{{4}}u_{{1}}-7\,{\alpha}^{2}u_{{4}}{u_{{0}}}^{2}-
14\,\alpha\,u_{{3}}u_{{2}}-28\,{\alpha}^{2}u_{{3}}u_{{1}}u_{{0}}\\
&\quad\qquad\qquad\qquad\qquad-21\,{\alpha}^{2}{u_{{2}}}^{2}u_{{0}}-28\,{\alpha}^{2}u_{{2}}{u_{{1}}}^{2}+
14\,{\alpha}^{3}u_{{2}}u_{{1}}{u_{{0}}}^{2}+14\,{\alpha}^{4}u_{{2}}{u_
{{0}}}^{4}\\
&\quad\qquad\qquad\qquad\qquad+{\frac {28}{3}}\,{\alpha}^{3}{u_{{1}}}^{3}u_{{0}}+28\,{
\alpha}^{4}{u_{{1}}}^{2}{u_{{0}}}^{3}-\frac{4}{3}\,{\alpha}^{6}{u_{{0}}}^{7} \in V_7.
\end{align*}
If we assume that $f(0)=0$, then any nonlinear Poisson equation whose potential satisfies $f_{uu}=\alpha f_u + 2 \alpha^2 f$ can be transformed to $u_{z \zb}=e^{-2u}-e^u$ using a coordinate transformation $u \to a u+b$ and $z \to c z$. 
\end{example}

\begin{rem}
It is curious that there are no conservation laws of weighted-degree three for the Tzitzeica equation, even though there are conservation laws of weighted-degree three for the sinh-Gordon equations, $u_{z \zb}=\sinh(u)$.  It would be interesting to understand the geometric implications of this fact.  
\end{rem}

In this article we complete the story by determining the spaces $V_{2n+1}$ for the Tzitzeica equation. The proof connects Killing fields of primitive maps and conservation laws.  In the next section we outline the basic aspects of primitive maps that will be needed.

%%%%%%%%%%%%%%%%%%%%%%%%%%%%%%%%%%%%%%%%%%%%%%%%%%%%%%%%%%%%%%%%%%%

\section{Primitive maps}
We recall the notion of primitive maps following \cite{Burstall1995}.  Let $\G$ be the compact real form of the semi-simple complex Lie group $\G^{\C}$ with respect to the anti-holomorphic involution $\sigma:\G^{\C} \to \G^{\C}$.  Let $\tau:\G^{\C} \to \G^{\C}$ be a commuting order-$k$ automorphism, $\tau^k=1$, and $\K \subset \G$ the group it stabilizes.  The tuple $(\G^{\C},\tau,\sigma)$ is a $k$-symmetric space though we often just refer to $M=\G/\K$ as the $k$-symmetric space and it is understood that some triple $(\G^{\C},\tau,\sigma)$ gives rise to $M$.   The automorphisms $\sigma, \tau$ induce automorphisms (known by the same name) of the Lie algebra $\g^{\C}$ of $\G^{\C}$.   We will assume that $\g^{\C} \subset \gl(r,\C)$ so that we can calculate using matrix multiplication.

There is a decomposition of  $\g^{\C}$ into the eigenspaces of $\tau$:
\be
\g^{\C}=\g_{0}\oplus \g_{1}\oplus \ldots \oplus \g_{-1}
\ee 
where the indices are defined modulo $k$ and designate that, for some primitive $k^{th}$-root of unity $\mu$, $\g_{j}$ is the eigenspace with eigenvalue $\mu^j$.  This decomposition induces a decomposition
\be
TM^{\C}= E_{1}\oplus \ldots \oplus E_{-1},
\ee
where each $E_{j}$ is a complex vector bundle, and there is no $E_0$ term because this corresponds to the fiber of $\G \to M$.  
\begin{definition}
Let $N$ be a Riemann surface and $M$ a $k$-symmetric space as above.  A smooth map $\phi:N \to M$ is {\bf primitive} if $\phi_*(T^{(1,0)}N)$ is a complex line in $  E_{-1}$.
\end{definition}
Though $M$ need not be an almost complex manifold, it does have an almost complex structure on a subbundle of $TM$. So a primitive map is something like a pseudo-holomorphic curve.  In \cite{Bryant2006} and \cite{Fox2007} examples of primitive map are referred to as \emph{$CR$-holmorphic curves}. Primitive maps are harmonic for appropriate invariant metrics on $\G /\K$ \cite{Burstall1994}.

On a simply connected Riemann surface $N$, any map to $\phi:N \to M$ can be lifted to a framing $F:N \to \G$ so that $F \cdot \K=\phi$.  The Maurer-Cartan form $\om$ on $\G$ pulls back to be a $\g$-valued one-form $\psi=F^*(\om)$ on $N$.  This form decomposes according to the decomposition of $\g_{\C}$, $\psi=\psi_{0}+\ldots + \psi_{-1}$.  The map is primitive if and only if $\psi=\psi_{-1}+\psi_0+\psi_1$ and $\psi_{-1} \in \Om^{(1,0)}(N)$.  This is easily seen to be equivalent to the condition that there exists a flat family of connections $\psi_{\lambda}=\psi_{-1}\lambda^{-1}+\psi_0+\psi_1 \lambda$ with $\psi_{\lambda} \in \g$ for $\lambda \in \s{1} \in \C$ and $\psi_{-1} \in \Om^{(1,0)}(N)$.  On simply connected surfaces such a family of flat connections is equivalent to a primitive map and choice of framing.

%%%%%%%%%%%%%%%%%%%%%%%%%%%%%%%%%%%%%%%%%%%%%%%%%%%%%%%%%%%%%%%%%%%

\section{The Toda field equations}\label{sec:Toda}
When $\K$ is a torus, there is a good frame and a good set of coordinates for a primitive map to $\G/ \K$, which reduce the primitive map system to the Toda field equations \cite{Bolton1995,Carberry2011}.   When $\K=\SO(2)$ the system reduces to a scalar nonlinear Poisson equation.  From now on, \\
\begin{center}
{\bf assume that $\K$ is a torus with Lie algebra $\ka$.}
\end{center}
\begin{definition}
Let $U \subset N$ be a simply connected open set of the Riemann surface $N$.  A local frame $F:U \to \G$ is called a {\bf Toda frame} if there exists a complex coordinate $z:U \to \C$ and a smooth map $\Om:U \to \mo \ka$ such that
\be\label{eq:TodaFrame}
F^{-1}\dd{F}{z} = \dd{\Om}{z}+Ad \exp(\Om)(B) \in E_0 \oplus E_{-1}
\ee
where $B \in \sum_{k=0}^r \sqrt{m_k}\xi_{\alpha_k} \in \g_{-1}$, $\alpha_k$ are simple roots, and $\theta=\sum_k m_k \alpha_k$ is the highest root.
\end{definition}
Bolton, Pedit, and Woodward \cite{Bolton1995} prove the following theorems.
\begin{theorem}
Let $\phi:N \to M$ be primitive and let $p_0 \in N$ be such that $\dd{\phi}{z}(p_0)$ is cyclic.  Then there is a local Toda frame $F$ of $\psi$ around $p_0$.  Moreover, the complex coordinate $z$ for the Toda frame is unique up to an arbitrary translation and rotation by a $d_r$-th root of unity, while the Toda frame $F$ is unique up to a multiplication by an element of the center $Z(\G)$ of $\G$. 
\end{theorem}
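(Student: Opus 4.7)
\medskip

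\noindent\emph{Proof plan.} The plan is to reduce an arbitrary local framing to the Toda normal form by successively exhausting the available gauge and coordinate freedoms, and then to identify the residual freedom for the uniqueness half.

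\emph{Existence.} I start from any local framing $\tilde{F}:U\to\G$ of $\phi$ around $p_0$ and any complex coordinate $w$ centered at $p_0$. The pullback $\tilde\psi=\tilde F^{-1}d\tilde F$ splits as $\tilde\psi_{-1}+\tilde\psi_0+\tilde\psi_1$ with $\tilde\psi_{-1}=P\,dw$ and $P:U\to\g_{-1}$ cyclic at $p_0$. The key algebraic input is that cyclic elements of $\g_{-1}$ form a single open orbit of $\K^{\C}$ through $B$; since $\K$ is a torus, the polar decomposition $\K^{\C}=\K\cdot\exp(\mo\,\ka)$ lets me write, on a possibly smaller neighborhood,
\[
P=\lambda\,\mathrm{Ad}\!\bigl(k\exp(\Omega)\bigr)(B),\qquad k:U\to\K,\ \Omega:U\to\mo\,\ka,\ \lambda:U\to\C^{*}.
\]
Replacing $\tilde F$ by $\tilde F\cdot k$ kills the $k$-factor in the $\g_{-1}$ part. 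I then want to absorb $\lambda$ into the coordinate, i.e.~introduce $z$ with $dz=\lambda\,dw$. The existence of such a $z$ amounts to $\bar\partial\lambda=0$, and this is exactly what the Maurer--Cartan identity $d\tilde\psi_{-1}+[\tilde\psi_0+\tilde\psi_1,\tilde\psi_{-1}]=0$ projected onto the open $\K^{\C}$-orbit direction delivers once we use primitivity ($\tilde\psi_{-1}\in\Omega^{(1,0)}$) and the fact that the $\K^{\C}$-stabilizer of $B$ is discrete.

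After these two reductions, the new frame $F$ satisfies $\psi_{-1}(\partial_z)=\mathrm{Ad}\exp(\Omega)(B)$. The remaining task is to show the $\g_0$-component of $F^{-1}\partial_z F$ equals $\partial_z\Omega$. Plug the achieved form of $\psi_{-1}$ into the $\g_{-1}$-component of the Maurer--Cartan equation; since $B$ is cyclic, the centralizer of $B$ in $\g_0$ is trivial (this is where the rank-one hypothesis matters for the scalar reduction), so the identity $[\psi_0(\partial_z),\mathrm{Ad}\exp(\Omega)(B)]=\partial_z(\mathrm{Ad}\exp(\Omega)(B))$ uniquely forces $\psi_0(\partial_z)=\partial_z\Omega$. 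Reality of $F$ is not disturbed because the real-type conditions imposed by $\sigma$ interchange $\psi_{-1}$ with $\psi_1$ and fix $\psi_0$ on $\mo\,\ka$, which is consistent with the constructed normal form.

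\emph{Uniqueness.} Let $(F,z,\Omega)$ and $(F',z',\Omega')$ both be Toda data for $\phi$, so $F'=F\cdot g$ with $g:U\to\K$ and $z'=h(z)$ with $h$ holomorphic. Writing out the Toda form of $(F')^{-1}\partial_{z'}F'$ and comparing $\g_{-1}$-parts gives
\[
\mathrm{Ad}(g^{-1})\bigl(\mathrm{Ad}\exp(\Omega)(B)\bigr)=h'(z)\,\mathrm{Ad}\exp(\Omega')(B).
\]
Because the $\exp(\mo\,\ka)$-orbit through $B$ is transverse to the $\C^{*}$-scaling direction along the cyclic orbit, and because the residual $\K^{\C}$-stabilizer of $B$ is contained in $Z(\G^{\C})$, the equation forces $h'(z)$ constant and, after separating $\K$ from $\exp(\mo\,\ka)$ via the torus polar decomposition, $g\in Z(\G)$ and $\Omega'=\Omega$ up to the constant. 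The scalar constant $h'$ must be a $d_r$-th root of unity because the cyclic element $B$ is acted on faithfully by $\C^{*}$ only through the group of roots of unity compatible with the $k$-symmetric grading (this is where the invariant $d_r$ enters: it is the order of the stabilizer of $B$ in the scaling $\C^{*}$). Integrating $h'=\text{const}$ produces a translation in $z$, completing the uniqueness statement.

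\emph{Main obstacle.} The technical heart is the step that produces the holomorphic coordinate $z$: showing that the scalar $\lambda$ extracted from $P$ satisfies $\bar\partial\lambda=0$. This is not automatic from primitivity alone and requires a careful componentwise reading of the Maurer--Cartan equation against the grading of $\g^{\C}$, combined with the fact that the centralizer of $B$ in $\g_{-1}$ is one-dimensional and exactly picks out the ``$\lambda$''-direction. Everything else (gauge reduction, uniqueness bookkeeping) is algebraic once this analytic step is in hand.
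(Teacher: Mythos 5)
The paper does not prove this theorem: it is quoted verbatim from Bolton--Pedit--Woodward \cite{Bolton1995} (``Bolton, Pedit, and Woodward \cite{Bolton1995} prove the following theorems''), so there is no in-paper argument to compare yours against. What follows is therefore an assessment of your sketch on its own terms.

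Your overall strategy is the standard one for this result: normalize $\psi_{-1}$ pointwise using the action of $\K^{\C}\times\C^{*}$ on the open set of cyclic elements of $\g_{-1}$, extract a holomorphic coordinate from the Maurer--Cartan equation, pin down $\psi_0$ via the triviality of the centralizer of a cyclic element in $\g_0$, and read the residual ambiguity off the finite stabilizer. Several details, however, are off or under-justified. (1) Cyclic elements do \emph{not} form a single open $\K^{\C}$-orbit: $\dim\K^{\C}=r$ while the cyclic set is open in $\g_{-1}$, which has dimension $r+1$; it is the combined $\K^{\C}\times\C^{*}$-action that is transitive up to a finite stabilizer. Your displayed decomposition $P=\lambda\,\mathrm{Ad}(k\exp\Omega)(B)$ already reflects this, but the justifying sentence does not. (2) The step $\bar\partial\lambda=0$ is correct in substance but your stated mechanism is the wrong one: what makes it work is that the radial direction $\C\cdot V$ is transverse to the orbit direction $[\ka^{\C},V]$ inside $\g_{-1}$ (equivalently $\sum_k m_k\neq 0$), so the covariant holomorphicity $\bar\partial P+[\psi_0'',P]=0$ forces the radial component $\bar\partial\lambda$ to vanish; discreteness of the stabilizer of $B$ is irrelevant here and ``projecting onto the open orbit direction'' is not a meaningful operation. (3) The Maurer--Cartan equation for $\psi_{-1}$ determines the $(0,1)$-part of $\psi_0$ (giving $\psi_0''=\bar\partial\Omega$ via the trivial centralizer), not $\psi_0(\partial_z)$ as you wrote; the $(1,0)$-part then comes from the reality condition on $F$. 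Relatedly, the parenthetical that the trivial centralizer ``is where the rank-one hypothesis matters'' is misplaced: the theorem holds for any torus $\K$, and a cyclic element has trivial centralizer in $\g_0$ in general because the roots $\alpha_0,\dots,\alpha_r$ span $\ka^{*}$. (4) Your identification of $d_r$ as ``the order of the stabilizer of $B$ in the scaling $\C^{*}$'' cannot be right as stated, since that stabilizer is trivial ($\lambda B=B$ forces $\lambda=1$); the $d_r$-th roots of unity arise as the image in the $\C^{*}$-factor of the finite stabilizer of $B$ in $\K^{\C}\times\C^{*}$, which is what makes the splitting of $P$ into $\lambda$ and $\exp(\Omega)$ ambiguous. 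None of these is fatal to the plan, but as written the proof would not compile into a correct argument without repairing (2)--(4).
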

\begin{theorem}
Let $z:U \to \C$ be a complex coordinate and let $\Om:U \to \mo \mct$ be smooth.  Then Eq.~\eqref{eq:TodaFrame} has a real solution $F$ if and only if $\Om$ satisfies the affine Toda field equations for $\G$, namely
\be\label{eq:TodaEquation}
2 \frac{\del^2\Om}{\del z \del \zb}-\sum_{k=0}^r m_k e^{2\alpha_k(\Om)}\alpha_k^{\#}=0.
\ee
In this case $\phi= \pi \circ F:U \to  M$ is primitive with Toda frame $F$.
\end{theorem}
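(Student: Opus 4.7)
The plan is to prove this as a Maurer--Cartan integrability statement. Given $\Omega$, the right-hand side of Eq.~\eqref{eq:TodaFrame} prescribes a candidate $(1,0)$-component $\psi_z = \partial_z \Omega + Ad\exp(\Omega)(B)$ for $F^{-1} dF$. The requirement that $F$ take values in the compact real form $\G$ is exactly the reality condition $\sigma(\psi) = \psi$, which forces $\psi_{\bar z} = \sigma(\psi_z) = -\partial_{\bar z}\Omega + Ad\exp(-\Omega)(\sigma(B))$, using that $\Omega \in \mo \ka$ so $\sigma(\Omega) = -\Omega$ and $\sigma$ commutes with $\partial_{\bar z}$. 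On the simply connected open set $U$, the existence of $F:U \to \G$ with $F^{-1}dF = \psi_z\,dz + \psi_{\bar z}\,d\bar z$ is then equivalent, up to a left translation by a constant in $\G$, to the flatness condition $d\psi + \tfrac{1}{2}[\psi,\psi] = 0$, i.e.,
\[
\partial_z \psi_{\bar z} - \partial_{\bar z}\psi_z + [\psi_z, \psi_{\bar z}] = 0.
\]

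The next step is to simplify this equation using that $\Omega$ takes values in the abelian subalgebra $\mo \ka$. Writing $\tilde B := \sigma(B) \in \g_1$, and using that $\Omega$ commutes with $\partial_z\Omega$ and $\partial_{\bar z}\Omega$, one has $\partial_z Ad\exp(-\Omega)(\tilde B) = [-\partial_z\Omega,\,Ad\exp(-\Omega)(\tilde B)]$ and $\partial_{\bar z} Ad\exp(\Omega)(B) = [\partial_{\bar z}\Omega,\,Ad\exp(\Omega)(B)]$. Substituting and using $[\partial_z\Omega,\partial_{\bar z}\Omega]=0$, the mixed terms of the form $[\partial_z\Omega, Ad\exp(-\Omega)(\tilde B)]$ and $[\partial_{\bar z}\Omega, Ad\exp(\Omega)(B)]$ cancel pairwise, reducing the integrability condition to
\[
-2\,\partial_z\partial_{\bar z}\Omega + \bigl[Ad\exp(\Omega)(B),\;Ad\exp(-\Omega)(\tilde B)\bigr] = 0.
\]

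Then I would evaluate the commutator using root data. Because $\Omega$ lies in the Cartan, $Ad\exp(\Omega)\xi_{\alpha_k} = e^{\alpha_k(\Omega)}\xi_{\alpha_k}$, so with $B = \sum_k \sqrt{m_k}\,\xi_{\alpha_k}$ and (with compact-form normalization) $\tilde B = -\sum_k \sqrt{m_k}\,\xi_{-\alpha_k}$, the bracket becomes
\[
-\sum_{k,l} \sqrt{m_k m_l}\, e^{(\alpha_k+\alpha_l)(\Omega)}\,[\xi_{\alpha_k},\xi_{-\alpha_l}].
\]
The crucial root-theoretic input is that the $\alpha_k$ form an extended (affine) set of simple roots with respect to $\theta$, so for $k \neq l$ the difference $\alpha_k - \alpha_l$ is never a root and $[\xi_{\alpha_k},\xi_{-\alpha_l}] = 0$; for $k = l$ we have $[\xi_{\alpha_k},\xi_{-\alpha_k}] = \alpha_k^{\#}$. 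The bracket collapses to $-\sum_k m_k e^{2\alpha_k(\Omega)}\alpha_k^{\#}$, and the integrability condition becomes exactly Eq.~\eqref{eq:TodaEquation}, up to an overall sign absorbed in the normalization of the $\xi_\alpha$.

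Finally, once $F$ exists, I would verify that $\phi = \pi \circ F$ is primitive with Toda frame $F$. By construction the $\g_{-1}$-component of $\psi$ is $Ad\exp(\Omega)(B)\,dz$, which lies in $E_{-1}\otimes\Omega^{(1,0)}(U)$ since $B \in \g_{-1}$ and $Ad\exp(\Omega)$ preserves the $\tau$-grading (as $\Omega \in \g_0$); the $\g_0$-component is $\partial_z\Omega\,dz - \partial_{\bar z}\Omega\,d\bar z$; and the $\g_1$-component is the conjugate of the $(1,0)$-piece. So $\psi_{-1}\in \Omega^{(1,0)}(U)\otimes \g_{-1}$, which is precisely the primitivity condition, and the form of $\psi$ means $F$ is a Toda frame. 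The main obstacle is the algebraic bookkeeping in the second and third steps: verifying that the cross-terms cancel requires careful handling of the fact that $\partial_z$ and $\partial_{\bar z}$ act on $Ad\exp(\pm\Omega)$ via the adjoint action of $\partial \Omega$ only because $\Omega$ is Cartan-valued, and identifying the bracket with the Toda right-hand side depends on the specific affine-simple structure of the $\alpha_k$ and compatible normalizations of the root vectors under $\sigma$.
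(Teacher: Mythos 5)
The paper does not prove this theorem; it is quoted verbatim from Bolton--Pedit--Woodward \cite{Bolton1995} and used as a black box, so there is no in-paper argument to compare against. Your zero-curvature derivation is the standard proof of this statement and is essentially sound: Eq.~\eqref{eq:TodaFrame} prescribes only the $(1,0)$-part of $F^{-1}dF$, reality of $F$ forces the $(0,1)$-part to be $\sigma$ of it, and on a simply connected $U$ existence of $F$ is equivalent to flatness of the resulting $\g$-valued form; the cross-terms cancel precisely because $\K$ is a torus, and the bracket $[\xi_{\alpha_k},\xi_{-\alpha_l}]$ vanishes for $k\neq l$ because the extended simple system $\{-\theta,\alpha_1,\dots,\alpha_r\}$ has the property that no difference of two distinct elements is a root. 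Two small caveats. First, the sign in front of $\sum_k m_k e^{2\alpha_k(\Om)}\alpha_k^{\#}$ genuinely depends on the normalization of $\sigma(\xi_{\alpha})$ and of the coroots $\alpha_k^{\#}$ (e.g.\ whether the Killing form or its negative is used on the compact form, and whether $\Om$ is viewed in $\ka$ or $\mo\ka$); with the most common compact-form conventions your computation produces $2\Om_{z\bar z}+\sum_k m_k e^{2\alpha_k(\Om)}\alpha_k^{\#}=0$, so to land exactly on the quoted sign you must pin down these choices rather than wave at them. Second, for the final claim you should add the one-line observation that $Ad\exp(\Om)(B)\neq 0$ (since $Ad\exp(\Om)$ is invertible and $B\neq 0$), so that $\phi_*(T^{(1,0)}U)$ is genuinely a complex \emph{line} in $E_{-1}$ as the definition of primitivity requires, and not merely contained in $E_{-1}$.
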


We will work with the Toda frame for $\SU(3)/\SO(2)$ in Sect.~\ref{sec:tzitzeica}.  Before that we describe the relationship between Killing fields for primitive maps and conservation laws. 

%%%%%%%%%%%%%%%%%%%%%%%%%%%%%%%%%%%%%%%%%%%%%%%%%

\section{Generating functions from Killing fields}\label{sec:CLfromKF}
We now introduce Killing fields for primitive maps following Burstall and Pedit \cite{Burstall1995}.  Killing fields were introduced in \cite{Burstall1993} as a way of packaging together infinitesimal deformations of harmonic maps.  This fact suggests that, in the case that the primitive maps are related to nonlinear Poisson equations, some components of the Killing field will provide infinitesimal solutions to the linearization of the nonlinear Poisson equation, Eq.~\eqref{eq:ED}.   We will show that, in the case that $\K$ is abelian, the $\g_0$ component of a formal Killing field satisfies the linearization of Toda field equation, making it a generating function for a conservation law.  A similar result should be true if $\K$ is not abelian.  Recall that $\K$ is a torus.

Suppose we have a family of flat connections $\psi_{\lambda}$ on a simply connected Riemann surface $N$ with holomorphic coordinate $z:N \to \C$. The existence of the spectral parameter allows one to use loop groups.  In this article will use the {\bf based loop algebra}
\[
\mcl(\g^{\C})= \left\lbrace  X_{\lambda} \in\g^{\C}[[ \lambda,\lambda^{-1}]] \; : \; {\rm if}\; X_{\lambda}=\sum_{n-\infty}^{\infty}X^n \lambda^n \;{\rm then}\;X^0=0\right\rbrace 
\]
associated to $\g^{\C}$ and the {\bf twisted based loop algebra}
\begin{align*}
\mcl^{\sigma,\tau}(\g^{\C})=\left\lbrace  X_{\lambda} \in \mcl(\g^{\C}):\; \sigma(X_{\ol{\lambda}^{-1}})=X_{\lambda}  \;\; \tau(X_{\mu \lambda})=X_{\lambda}, \;\; \mu=e^{\frac{2\pi \mo}{k}} \right\rbrace
%&\{ X_{\lambda}=\ldots +\lambda^{-2}X^{-2}+\lambda^{-1}X^{-1}+B^1 \lambda +\lambda^{2}X^{2}+ \ldots: \;B^n \in \sla(2,\C)  \;\; \sigma(X_{\ol{\lambda}^{-1}})=X_{\lambda}  \;\; \tau(X_{-\lambda})=X_{\lambda} \}
\end{align*}
asociated to the $k$-symmetric space $(\G^{\C},\sigma, \tau)$.  

\begin{definition}
A {\bf formal Killing field} for the family of flat connections $\psi_{\lambda}$ on the Riemann surface $N$ is a map  $X_{\lambda}:N \to \mcl^{\sigma, \tau}(\g^{\C})$ satisfying 
\be\label{eq:KillingEquation}
\ed X_{\lambda}+[\psi_{\lambda}, X_{\lambda}]=0.
\ee
\end{definition}

We can express a Killing field as
\[
X_{\lambda}=\sum_{n=-\infty}^{\infty} \left( X^{kn}\lambda^{kn}+X^{kn+1}\lambda^{kn+1}+\ldots +X^{kn+k-1}\lambda^{kn+k-1}    \right) 
\]
where $X^{kn+j} \in \g_{j}$.  In terms of this expansion Eq.~\eqref{eq:KillingEquation} can be written as
\[
\ed X^{kn+j}+[\psi_0,X^{kn+j}]+[\psi_{-1},X^{kn+j+1}]+[\psi_{1},X^{kn+j-1}]=0,
\]
which decomposes into
\begin{align}
&\del X^{kn+j}+[\psi_0',X^{kn+j}]+[\psi_{-1},X^{kn+j+1}]=0 \label{eq:delX}\\
&\delb X^{kn+j}+[\psi_0'',X^{kn+j}]+[\psi_{1},X^{kn+j-1}]=0.\label{eq:delbX}
\end{align}
where $\psi_0=\psi'_0+\psi''_0$, $\psi'_0 \in \Omega^{(1,0)}(N)$ and $\psi''_0 \in \Omega^{(0,1)}(N)$. 

Choosing a local coframe $\xi \in \Omega^{(1,0)}(U)$  we can express the connection as
\be
\psi=A_{-1} \xi + A_0' \xi - A_{0}'' \xib + A_1 \xib
\ee 
with $A_{j}:N \to \g_j$. We now have the formula
\be
\ed \psi_{-1}=-[A_{-1},A_{0}'']\xi \w \xib,
\ee
which will be used in the calculation below. Define the Laplacian 
\[\Delta:C^{\infty}(N,\g_0) \to C^{\infty}(N,\g_0)
\]
by the condition $\delb \del P= \frac{1}{4}\Delta(P)\xi \w \xib$.   We now show that the $\g_0$-components of the Killing field, $X^{kn}$, satisfy a linear elliptic equation.  

Taking $\delb$ of Eq.~\eqref{eq:delX} for the case $j=0$ and using the fact that $\g_0$ is abelian, we calculate
\begin{align}
 \delb \del X^{kn}& = -\delb[\psi_{-1},X^{kn+1}] \nonumber\\
& =[A_{-1},A_{0}'']X^{kn+1} \xi \w \xib + \psi_{-1} \w (-[\psi_{0}'',X^{kn+1}]-[\psi_{1},X^{kn}]) \nonumber\\
& -([\psi_{0}'',X^{kn+1}]+[\psi_1,X^{kn}]) \w \psi_{-1} - X^{kn+1} [A_{-1},A_{0}''] \xi \w \xib \label{eq:Laplacian}\\
&= \left( [[A_{-1},A_{0}''],X^{kn+1}]+[[A_{0}'',X^{kn+1}],A_{-1}]+[[A_1,X^{kn}],A_{-1}]\right) \xi \w \xib \nonumber\\
& = \left(- [[X^{kn+1},A_{-1}],A_{0}'']+[[A_1,X^{kn}],A_{-1}]\right) \xi \w \xib \nonumber\\
& = [[A_1,X^{kn}],A_{-1}] \xi \w \xib \nonumber
\end{align}
The first term in the penultimate line of Eq.~\eqref{eq:Laplacian} vanishes because $[\g_1 , \g_{-1}] \subset \g_0$ and $\g_0$ is abelian.  This leaves
\be
\frac{1}{4}\Delta X^{kn}+[A_{-1},[A_1,X^{kn}]]=0
\ee

Define the first order linear differential operator $\mcd:\Om^0(N,\g_0) \to \Om^0(N,\g_0)$ as
\be\label{eq:EDforKF}
\mcd(P)= \Delta P+4[A_{-1},[A_{1},P]].
\ee
The calculation above proves
\begin{lemma}\label{lem:g0kf}
The $\g_0$-component of a Killing field lies in the kernel of $\mcd$.   
\end{lemma}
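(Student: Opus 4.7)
The plan is to take $\bar{\partial}$ of the $j=0$ instance of Eq.~\eqref{eq:delX}, namely
\[
\partial X^{kn} + [\psi_0', X^{kn}] + [\psi_{-1}, X^{kn+1}] = 0,
\]
and to exploit the two structural facts that $\g_0$ is abelian (since $\K$ is a torus) and that $[\g_1,\g_{-1}] \subset \g_0$. First I would observe that $\psi_0'$ takes values in $\g_0$ and $X^{kn} \in \g_0$, so the abelianness of $\g_0$ kills the middle term outright and the equation reduces to $\partial X^{kn} = -[\psi_{-1},X^{kn+1}]$.

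Next I would apply $\bar{\partial}$ to this simplified equation. The left-hand side becomes $\frac{1}{4}\Delta(X^{kn})\,\xi\wedge\xib$ by the definition of $\Delta$. The right-hand side expands by Leibniz into two pieces: one involving $\ed\psi_{-1}$, which the Maurer--Cartan computation already recorded gives as $-[A_{-1},A_0'']\,\xi\wedge\xib$, and one involving $\bar{\partial}X^{kn+1}$, which I would substitute using Eq.~\eqref{eq:delbX} at $j=1$: $\bar{\partial}X^{kn+1} = -[\psi_0'',X^{kn+1}] - [\psi_1,X^{kn}]$.

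Collecting terms produces the coefficient of $\xi\wedge\xib$ as a sum of three double-bracket expressions in $X^{kn+1}$ and $X^{kn}$. The Jacobi identity reorganizes two of these into a single double-bracket of the form $[[X^{kn+1},A_{-1}],A_0'']$, which vanishes: the inner bracket lies in $[\g_1,\g_{-1}]\subset\g_0$, and $A_0'' \in \g_0$ as well, so abelianness forces the result to be zero. What survives is precisely $[[A_1,X^{kn}],A_{-1}]\,\xi\wedge\xib$, which rearranges to $-[A_{-1},[A_1,X^{kn}]]\,\xi\wedge\xib$, yielding $\frac{1}{4}\Delta X^{kn} + [A_{-1},[A_1,X^{kn}]] = 0$, i.e., $\mcd(X^{kn})=0$.

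The main obstacle is purely bookkeeping: tracking signs correctly through the graded Leibniz rule for the bracket-valued one-forms, and identifying which two of the three nested brackets after expansion are the ones that combine via Jacobi into a piece vanishing by the abelianness of $\g_0$. Once the structural conditions ($\g_0$ abelian, $[\g_1,\g_{-1}]\subset\g_0$) are pinpointed, every cancellation is forced and nothing deeper is required; the statement about $X^{kn}$ lying in $\ker\mcd$ is in effect a direct unpacking of the $j=0$ slice of the Killing equation.
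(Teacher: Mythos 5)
Your proposal is correct and follows essentially the same route as the paper: taking $\delb$ of the $j=0$ component of the Killing equation, substituting the $j=1$ instance of Eq.~\eqref{eq:delbX} and the formula $\ed\psi_{-1}=-[A_{-1},A_0'']\,\xi\w\xib$, and then using the Jacobi identity together with $[\g_1,\g_{-1}]\subset\g_0$ and the abelianness of $\g_0$ to eliminate all but the term $[[A_1,X^{kn}],A_{-1}]$. The resulting identity $\tfrac{1}{4}\Delta X^{kn}+[A_{-1},[A_1,X^{kn}]]=0$ is exactly the paper's conclusion, so nothing further is needed.
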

Burstall, et al.~\cite{Burstall1993} prove that every component of a Killing field satisfies such an equation, but we will only need to work with the $\g_0$ component.

We consider the parallel situation for Toda fields.  Using the notation of Sec.~\ref{sec:Toda}, let $\mce_{\Om}:\g_0 \to \g_0$ be the linearization of Eq.~\eqref{eq:TodaEquation} about the solution $\Om$:
\be\label{eq:LinearToda}
\mce_{\Om}(P)=2 \frac{\del^2 P}{\del z \del \zb}+\sum_{k=0}^r m_k e^{\alpha_k(\Om)}\alpha_k^{\#} \alpha_k(P).
\ee 
By differentiating Eq.~\eqref{eq:TodaEquation} with respect to $z$ one finds that $P=\dd{\Om}{z}$ is a solution to Eq.~\eqref{eq:LinearToda}.  The Toda equation, Eq.~\eqref{eq:TodaEquation}, is invariant under the $\s{1}$-action $z \to \lambda z$ for $\lambda \in \s{1} \subset \C$.  As in \cite{Fox2011}, this leads to the solution 
\be
q=2 \Im \left(z \dd{\Om}{z}\right)
\ee 
of Eq.~\eqref{eq:LinearToda}. When $\G/\K=\SU(3)/\SO(2)$, Eq.~\eqref{eq:LinearToda} reduces to Eq.~\eqref{eq:ED} for the Tzitzeica equation.  In Sec. \ref{sec:tzitzeica} we will produce a recursion for the kernal of $\mce_{\Omega}$ using a Toda frame of the $\SU(3)/\SO(2)$ primitive map system.

%%%%%%%%%%%%%%%%%%%%%%%%%%%%%%%%%%%%%%%%%%%%%%%%%%%%%%

\section{Killing fields and conservation laws}
Using elements of the kernel of $\mce$, we present a simple formula for conservation laws for primitive maps when $\K$ is abelian.  The formula is essentially the elliptic version of Eq.~5.15 in the work of Terng and Wang on the $G/K$-systems \cite{Terng2005}.  
\begin{definition}
For any pair of functions $P,Q \in \Om^0(N,\g_0)$ define 
\be\label{eq:conservationlaw}
\p_{P,Q}=-\mo J(\kappa(P,\ed Q)-\kappa(Q,\ed P)) \in \Om^1(N,\C)
\ee
where $\kappa$ is the Killing form of $\g^{\C}$.
\end{definition}
\begin{lemma}\label{lem:KFCL}
The one-form $\p_{P,Q}$ is closed on solutions to the primitive map system if $P,Q \in \ker(\mcd)$.
\end{lemma}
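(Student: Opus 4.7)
The plan is to compute $d\p_{P,Q}$ directly in Dolbeault components and show that it vanishes on any primitive solution. Since $-\mo J$ acts as $+1$ on $\Om^{(1,0)}(N)$ and $-1$ on $\Om^{(0,1)}(N)$, one first rewrites
\be
\p_{P, Q} = \kappa(P, \del Q) - \kappa(P, \delb Q) - \kappa(Q, \del P) + \kappa(Q, \delb P).
\ee

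Apply $d = \del + \delb$. Because $N$ is a Riemann surface the $(2,0)$- and $(0,2)$-parts vanish, leaving four $(1,1)$-form "cross" terms of the form $\pm\kappa(\delb P,\del Q)$ and four second-derivative terms of the form $\pm\kappa(P,\delb\del Q)$. The symmetry of the Killing form together with the antisymmetry of $\w$ gives $\kappa(\delb P, \del Q) = -\kappa(\del Q, \delb P)$, and a short bookkeeping shows that the four cross terms cancel in pairs. Using $\del\delb = -\delb\del$, the remaining terms collapse to
\be
d\p_{P, Q} = 2\kappa(P, \delb\del Q) - 2\kappa(Q, \delb\del P),
\ee
and substituting the definition $\delb\del f = \tfrac{1}{4}\Delta(f)\,\xi \w \xib$ reduces everything to
\be
d\p_{P, Q} = \tfrac{1}{2}\bigl(\kappa(P, \Delta Q) - \kappa(Q, \Delta P)\bigr)\,\xi \w \xib.
\ee

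Now invoke the hypothesis $\mcd(P) = \mcd(Q) = 0$, which gives $\Delta P = -4[A_{-1}, [A_{1}, P]]$ and analogously for $Q$. Iterated ad-invariance combined with the symmetry of $\kappa$ gives $\kappa(P, [A_{-1}, [A_{1}, Q]]) = \kappa(A_{-1}, [[A_{1}, Q], P])$ and the analogous formula with $P$ and $Q$ interchanged, so the difference in question reduces to $\kappa\bigl(A_{-1},\, [[A_{1}, Q], P] - [[A_{1}, P], Q]\bigr)$. The Jacobi identity rewrites this inner expression as $[A_{1}, [Q, P]]$, which vanishes because $P, Q \in \g_0$ and $\g_0$ is abelian (the hypothesis that $\K$ is a torus). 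Hence $d\p_{P, Q} = 0$.

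The main obstacle is this final Lie-algebraic step: one has to coordinate ad-invariance and the Jacobi identity in exactly the right order so that the abelian hypothesis on $\g_0$ kills the remaining bracket. The Dolbeault bookkeeping that produces the Laplacian form of $d\p_{P, Q}$ is routine once the identity $-\mo J = \del - \delb$ on $1$-forms is in hand.
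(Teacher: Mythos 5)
Your proof is correct and follows essentially the same route as the paper's: rewrite $\p_{P,Q}$ via $-\mo J=\del-\delb$ on one-forms, reduce $\ed\p_{P,Q}$ to $\tfrac{1}{2}\bigl(\kappa(P,\Delta Q)-\kappa(Q,\Delta P)\bigr)\,\xi\w\xib$, substitute $\Delta=-4\,\mathrm{ad}_{A_{-1}}\mathrm{ad}_{A_1}$ from $\mcd(P)=\mcd(Q)=0$, and kill the difference by ad-invariance of $\kappa$ plus the fact that $\g_0$ is abelian. The only (cosmetic) difference is that you shuttle both brackets onto the $A_{-1}$ slot and invoke Jacobi to expose $[A_1,[Q,P]]$, whereas the paper moves them onto the $Q$ slot and matches the two terms directly; both hinge on the same abelian hypothesis.
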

\begin{proof}
We calculate, juggling terms with the Killing form at the end to find the desired cancelation.  As before, let $z$ be a local holomorphic coordinate on the simply connected open set $U \subset N$ so that $\ed P=P_z \ed z+P_{\ol{z}} \ed \ol{z}$. We will now express $\psi=A_{-1} \ed z + A_{0}' \ed z + A_{0}'' \ed \zb + A_{1}\ed \zb$.  In the notation from before, this amounts to choosing $\xi=\ed z$.  We will make use of the defining relations $J\ed z=\mo \ed z$ and $J \ed \zb = - \mo \ed \zb$.  First we record that
\begin{align*}
\p_{P,Q}&=-\mo J \left( \kappa(P,\ed Q)-\kappa(Q,\ed P) \right)\\
&=\kappa(P,\del Q)-\kappa(P,\delb Q) - \kappa(Q,\del P)+\kappa(Q,\delb P).
\end{align*}
Now we calculate, leaving the reader the small joy of canceling terms,
\begin{align}
\ed \p_{P,Q}& = \frac{1}{2} \left( \kappa(P,\Delta Q) -\kappa(Q, \Delta P)\right) \ed z \w \ed \zb \nonumber\\
& =-2 \left\{ \kappa(P,[A_{-1},[A_{1},Q]])-\kappa(Q,[A_{-1},[A_{1},P]])   \right\} \ed z \w \ed \ol{z} \nonumber\\
& =-2 \left\{ \kappa([P,A_{-1}],[A_{1},Q])-\kappa(Q,[A_{-1},[A_{1},P]])   \right\} \ed z \w \ed \ol{z}  \\
& =-2 \left\{ \kappa([[P,A_{-1}],A_{1}],Q)-\kappa(Q,[A_{-1},[A_{1},P]])   \right\} \ed z \w \ed \ol{z}  \nonumber\\
& =-2 \left\{ \kappa(Q,[A_{-1},[A_{1},P]])-\kappa(Q,[A_{-1},[A_{1},P]])   \right\} \ed z \w \ed \ol{z} \nonumber \\
& =0.\nonumber
\end{align}
\end{proof}

I expect that most choices of pairs $P,Q$ will lead to trivial conservaton laws- this should be equivalent to the conservation laws commuting under the Poisson bracket.  The results in \cite{Fox2011} suggest that there exists a $q \in \ker(\mce)$, unique up to scale and of degree zero in some sense, such that all of the nontrivial conservation laws are of the form $\p_{P,q}$ where $P \in \ker(\mce)$ and $P$ satisfies some additional conditions.  Up to scale this definition specializes to the normal form found in \cite{Fox2011} using the normal form of Bryant and Griffiths \cite{Bryant1995} together with the $\s{1}$-symmetry to reduce a two-form to a one-form.

\begin{rem}
For rank-$1$ Toda field equations one can define a Poisson bracket on the space of generating functions, or equivalently (see Thm.~\ref{thm:MainTheorem}) on the space of conservation laws \cite{Shadwick1981}.  For $P,Q \in V$ we define $\{ P,Q\}=R$ if $[\p_{P,Q}]=[\p_{q,R}]$. If $P,Q$ are both of nonzero degree, and thus both of odd degree, then $\p_{P,Q}$ is even degree and thus trivial in characteristic cohomology, implying that $\{ P,Q\}=0$.  If $q \in V_0$ and $P \in V_d$ with $d \neq 0$, then clearly $\{q,P \}=P$. Thus we have the following bracket relations:
\begin{align}
\{ P,Q\}&=0 \; {\rm if} \; \deg(P)\neq 0,\;\deg(Q) \neq 0\\
\{ q,P\}&=P \; {\rm if} \; \deg(P) \neq 0
\end{align}
\end{rem}

It seems likely that the conservation laws of the EDS associated to the Toda equations is spanned by $\p_{q,P}$ for a countably infinite set of $P$ and the single exceptional class associated solely to $q=\Im(z\frac{\del \Omega}{\del z})$. This should be nearly the same as the (local) characteristic cohomology for the primitive map system since the two systems are related by integrable extension.

A calculation similar to the proof of Lem.~\ref{lem:KFCL} shows that, given any solution $P$ of \eqref{eq:LinearToda}, $\p_{q,P}$ defines a conservation law for the Toda field system.  We now turn to the Tzitzeica equation to illustrate how Killing fields not only provide generating functions for conservation laws as elements of the characteristic cohomology, but also lead to recursions for proving the existence of the generating functions.  
%%%%%%%%%%%%%%%%%%%%%%%%%%%%%%%%%%%%%%%%%%%%%%%%%%%%%%%%%%%%%%%

\section{The Tzitzeica equation and $\SU(3)/\SO(2)$}\label{sec:tzitzeica}
The Tzitzeica equation
\be\label{eq:tzitzeica}
u_{z \zb}=e^{-2u}-e^u,
\ee
is the Toda field equation for primitive maps to $\SU(3)/\SO(2)$.  In the notation of Eq.~\eqref{eq:fGordon} we have $f(u)=e^u-e^{-2u}$ and $f_{uu}=- f_u+2 f$.  The linearization is
\begin{equation}
\mce(P)=P_{-1,\ol{-1}}+\left(e^u+2e^{-2u} \right) P.
\end{equation}
In this section we prove
\begin{theorem}\label{thm:Main}
For the nonlinear Poisson equation, Eq.~\eqref{eq:tzitzeica} and $k \geq 0$,
\begin{align*}
V_{0} &\cong \C\\
V_{6k+1} &\cong \C\\
V_{6k+3}&=0\\
V_{6k+5} &\cong \C \\
V_{-d}& \cong \ol{V_d} \;\;\;{\rm for}\;\; d  \geq 0.
\end{align*}  
\end{theorem}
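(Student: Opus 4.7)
The plan is to reduce the theorem, via Lem.~\ref{lem:g0kf} and the Killing-field/conservation-law correspondence of Sec.~\ref{sec:CLfromKF}, to producing enough formal Killing fields for the $\SU(3)/\SO(2)$-primitive map system and controlling their $\g_0$-components. From Thm.~\ref{thm:MainTheorem} I already have $V_0 = \C \cdot q$, the vanishing $V_d = 0$ for nonzero even $d$, the upper bound $\dim_\C V_d \leq 1$ for odd $d$, and the conjugation isomorphism $V_{-d} \cong \overline{V_d}$. What is left to establish is the existence of nonzero elements in $V_{6k+1}$ and $V_{6k+5}$ for every $k \geq 0$, together with the vanishing $V_{6k+3} = 0$.

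First I would fix an explicit Toda frame for $\SU(3)/\SO(2)$: write down matrices $A_{-1}, A_0', A_0'', A_1$ in $\mathfrak{sl}(3,\C)$ realizing the $\Z/6$-decomposition $\g^{\C} = \g_0 \oplus \g_1 \oplus \cdots \oplus \g_5$ with $\dim_\C \g_0 = 1$, verify that the reduced Toda field equation \eqref{eq:TodaEquation} becomes the Tzitzeica equation \eqref{eq:tzitzeica}, and check that the operator $\mcd$ of \eqref{eq:EDforKF} coincides with $\mce$ up to a nonzero constant under the scalar identification of $\g_0$-valued maps. Next I would expand a formal Killing field
\[
X_\lambda = \sum_{n \in \Z,\; 0 \leq j \leq 5} X^{6n+j} \lambda^{6n+j}, \qquad X^{6n+j} \in \g_j,
\]
and unpack Eqs.~\eqref{eq:delX}--\eqref{eq:delbX}. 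The key structural point is that $\mathrm{ad}\, B|_{\g_{j+1}}$ is injective whenever $j+1 \not\equiv 0 \pmod 6$ by cyclicity of the Toda element $B$, so each $X^{6n+j+1}$ is uniquely determined by the preceding $X^{6n+j}$, and the freedom in the recursion enters only at the slots $X^{6n} \in \g_0$; these freedoms are exactly what produce new generating functions via Lem.~\ref{lem:g0kf}. Tracking the $\s{1}$-weights with $\wt(\lambda) = -1$, a seed $X^0 \in \g_0$ at level $\lambda^{6n}$ propagates to a generating function of weight $6n+1$ (the $+1$ shift matching the fact that the canonical seed $\partial\Omega/\partial z$ is the weight-$1$ solution $u_0 \in V_1$). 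Induction on $n$, with non-degeneracy guaranteed at each step by the injectivity above, yields a nonzero element of $V_{6k+1}$ for every $k \geq 0$. A parallel recursion, seeded by the independent weight-$5$ generating function displayed in the example at the end of Sec.~\ref{sec:tzitzeica}, produces a nonzero element of $V_{6k+5}$ for every $k \geq 0$.

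The vanishing of $V_{6k+3}$ is where I expect the main obstacle, since the Killing-field construction above produces generating functions only at weights congruent to $\pm 1 \pmod 6$ and so does not by itself rule out weight-$(6k+3)$ solutions. My plan is to combine two ingredients. A direct computation at the third prolongation shows $V_3 = 0$: the upper bound $\dim_\C V_3 \leq 1$ from Thm.~\ref{thm:MainTheorem} reduces this to writing the most general weighted-degree-$3$ ansatz in $u_0, u_1, u_2$ and verifying that $\mce(P)=0$ together with the fact that $f = e^u - e^{-2u}$ does not satisfy any linear first-order autonomous ODE forces every coefficient to vanish. I would then extract from the same Killing-field analysis a weight-$6$ recursion operator $\mcp: V_d \to V_{d+6}$ and show that, restricted to the subsequence $V_3, V_9, V_{15}, \ldots$, it is a \emph{bijection}, so that $V_3 = 0$ propagates to $V_{6k+3} = 0$ for all $k \geq 0$. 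Bijectivity reduces to the nonvanishing of a specific iterated bracket of $A_{-1}$ and $A_1$ acting between appropriate $\g_j$'s, which is a finite matrix calculation in $\mathfrak{sl}(3,\C)$. The delicate step will be arranging the recursion so that the \emph{same} cyclic-element injectivity that makes $\mcp$ nondegenerate on the $V_{6k+1}$ and $V_{6k+5}$ families is compatible with the $\tau$-equivariance and reality conditions $\tau(X_{\mu\lambda})= X_\lambda$, $\sigma(X_{\bar\lambda^{-1}}) = X_\lambda$ that simultaneously obstruct the weight-$(6k+3)$ component; threading this needle uniformly in $k$ is the main technical challenge.
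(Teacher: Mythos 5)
Your overall strategy matches the paper's: derive an order-six recursion from the formal Killing field equations for $\SU(3)/\SO(2)$, seed it with $u_0 \in V_1$ and the explicit weight-five generating function to populate $V_{6k+1}$ and $V_{6k+5}$, check $V_3=0$ by hand, and propagate that vanishing along the recursion, with negative degrees handled by conjugation. However, your ``key structural point'' --- that $\mathrm{ad}\,B$ is injective on each $\g_{j+1}$ with $j+1 \not\equiv 0 \pmod 6$, so that every component $X^{6n+j+1}$ is \emph{algebraically} determined by its predecessor --- is false for this $6$-symmetric space, and the failure occurs at exactly the steps that carry the content of the theorem. From Eq.~\eqref{eq:eigenspaces}, $\g_1$ and $\g_5$ are two-dimensional while $\g_0$, $\g_2$, $\g_3$, $\g_4$ are one-dimensional, so $\mathrm{ad}(A_{-1}):\g_1 \to \g_0$ and $\mathrm{ad}(A_{-1}):\g_5\to\g_4$ have nontrivial kernels and cannot determine $X^{6n+1}=(b^n,c^n)$ from $a^n$, nor $X^{6n+5}=(t^n,v^n)$ from $s^n$. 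What the paper actually does at these two slots is combine the $\partial$- and $\bar\partial$-components of the Killing field equation into a closed one-form ($\alpha^n$, resp.\ $\beta^n$) of \emph{even} weighted degree, and then invoke the vanishing $V_d=0$ for nonzero even $d$ from Thm.~\ref{thm:MainTheorem} to conclude that $[\alpha^n]=[\beta^n]=0$ in $\bar H^1$, so that primitives $b^n$, $t^n$ exist (Lem.~\ref{lem:btdefined}). This integration step is the entire point of the connection between Killing fields and characteristic cohomology; without it your recursion is not even well defined, and no finite matrix computation in $\mathfrak{sl}(3,\C)$ will substitute for it.

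Two smaller omissions: first, producing $a^{n+1}\in\ker(\mce)$ is not enough --- you must also verify the normal-form constraints $A_u = A_{u_i\bar u_j}=0$ so that $\mcp(a^n)$ lands in $V_{d+6}$ rather than merely in the kernel of the linearization; the paper needs Lemmas~\ref{lem:btexist} and~\ref{lem:nou} for this, and your proposal does not address it. Second, your worry that $\tau$-equivariance and reality conditions must ``obstruct'' the weight-$(6k+3)$ slots is misplaced: the recursion acts on those degrees exactly as on the others, and the vanishing $V_{6k+3}=0$ comes solely from the direct computation $V_3=0$ pushed along the inverse recursion $\mcn$ (which is an isomorphism $V_{6k+3}\to V_{6k-3}$ for $k\geq 1$), not from any representation-theoretic selection rule.
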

Although a recursion operator is known for the Tzitzeica equation \cite{Wang2002}, the analogue of the vanishing result $V_{6k+3}=0$ seems not to be in the literature.  We prove Thm.~\ref{thm:Main} by finding a pair of recursions, 
\begin{align}
\mcp:&V_{d}\to V_{d+6}\\
\mcn:&V_{d+6} \to V_{d}
\end{align}
for the generating functions of the conservation laws of the Tzitzeica equation. 
\begin{definition}{\bf (The Recursion $\mcp$)}:
Let $a^n \in V_{d}$ for $d \equiv 1  \mod 2$.  Define $a^{n+1}=\mcp(a^n) \in  V_{d+6}$ through the following process.  Define the one-form
\be\label{eq:alpha}
\alpha^n= \frac{\mo}{\sqrt{2}}(a^n_{-1,-1}+2u_0 a^n_{-1}) \ed z -\frac{3\mo}{\sqrt{2}}e^u a^n \ed \zb
\ee
Let $b^n:\M{\infty} \to \C$ be the unique function of weighted-degree $d+1$ such that $\ed b^n \equiv \alpha^n$ modulo $\I{\infty}$.  Now recursively construct
\begin{align}
f^n&=\mo e^{\frac{u}{2}}(b^n_{-1} - u_{0} b^n)\label{eq:fn}\\
r^n&= \sqrt{2}e^{-\frac{u}{2}}(f^n_{-1}+\frac{1}{2} u_{0} f^n)\\
s^n&=-\frac{1}{\sqrt{2}} e^{-{u}} r^n_{-1}.\label{eq:sn}
\end{align} 
Define the one-form
\be\label{eq:beta}
\beta^n= \mo e^u(s^n_{-1,-1}-u_0 s^n_{-1}) \ed z  - 3 \mo e^{-u}s^n \ed \zb
\ee
and let $t^n:\M{\infty} \to \C$ be the unique function of weighted-degree $d+5$ such that $\ed t^n \equiv \beta^n$ modulo $\I{\infty}$.   Finally, setting 
\be
a^{n+1}=- \sqrt{-2}(t^n_{-1}+u_0 t^n)
\ee
we define $\mcp(a^n)=a^{n+1}$.
\end{definition}

\begin{definition}{\bf (The Recursion $\mcn$)}:
Let $a^{n+1}\in V_{d}$.  We define $a^{n}=\mcn(a^{n+1}) \in  V_{d-6}$ through the following process.  First define the one-form
\be
\alpha^n= \frac{\mo}{\sqrt{2}}\left[ 3e^u a^n \ed z -(a^n_{\ol{-1},\ol{-1}}+2\ol{u_0} a^n_{\ol{-1}}) \ed \zb \right]
\ee
Let $t^n:\M{\infty} \to \C$ be the unique function of weighted-degree $d+1$ such that 
\[
\ed (e^u t^n) \equiv \alpha^n \;\; {\rm mod}\; \I{\infty}.
\]
Now recursively construct
\begin{align}
s^n&=\mo e^{u} t^n_{\ol{-1}} \\
r^n&= \sqrt{2}(s^n_{\ol{-1}}+\ol{u_{0}} s^n)\\
f^n&=-\frac{1}{\sqrt{2}} e^{-\frac{u}{2}} r^n_{\ol{-1}}.
\end{align} 
Define the one-form
\begin{equation}
\beta^n=  -3 \mo \left(e^{-\frac{3}{2}u}f^n \right) \ed z+  \mo e^u \left[  \left(e^{-\frac{1}{2}u} f^n \right)_{\ol{-1},\ol{-1}}-\ol{u_0} \left(e^{-\frac{1}{2}u }f^n \right)_{\ol{-1}} \right]  \ed \zb
\end{equation}
and, once again, let $B^n:\M{\infty} \to \C$ be the unique function of weighted-degree $d-5$ such that $\ed \left( e^{-u}b^n \right) \equiv \beta^n$ modulo $\I{\infty}$.   Finally, setting
\be
a^{n}=\sqrt{2}\mo e^{-u} b^n_{\ol{-1}}
\ee
we define $\mcn(a^{n+1})=a^{n}$.
\end{definition}

The maps $\mcp$ and $\mcn$ have been normalized so that 
\begin{align}
\mcp(u_d+ \cdots)&=u_{d+6}+ \cdots \nonumber\\
\mcp(u_{\ol{d}}+ \cdots)&=u_{\ol{d-6}}+ \cdots \label{eq:normalization}\\
\mcn(u_{{d}}+ \cdots)&=u_{{d-6}}+ \cdots  \nonumber\\
\mcn(u_{\ol{d}}+ \cdots)&=u_{\ol{d+6}}+ \cdots \nonumber
\end{align}
and thus $\mcp \mcn= \mcn \mcp = {\rm Id}$ on $V_{d}$ for $d \neq -5,-1,0,1,5$. 

The importance of these recursions lies in the following:
\begin{proposition}\label{prop:recursion2}
\hspace{1cm}
\begin{enumerate}
\item The map $\mcp:V_{d} \to V_{d+6}$ is a linear isomorphism (as vector spaces) for all odd $d$ except when $d=-1$ or $d=-5$.
\item The map $\mcn:V_{d} \to V_{d-6}$  is a linear isomorphism (as vector spaces) for all odd $d$ except when $d=1$ or $d=5$.
\end{enumerate}
\end{proposition}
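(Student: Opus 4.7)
The plan is to carry out the proof in three logical steps. First, I would establish that the recursions $\mcp$ and $\mcn$ are well-defined on $V_d$. Second, I would verify that their outputs actually lie in $V$, i.e.~that they solve the linearized Tzitzeica equation $\mce(a^{n+1})=0$ and satisfy the no-mixing condition $A_{u_i,\ol{u_j}}=A_u=0$. Third, I would combine the normalizations recorded in Eq.~\eqref{eq:normalization} with the a priori bound $\dim_{\C} V_d \leq 1$ from Thm.~\ref{thm:MainTheorem} to upgrade the factorization $\mcp\mcn=\mcn\mcp={\rm Id}$ (on $V_d$ for $d\notin\{-5,-1,0,1,5\}$) into genuine isomorphism statements in the ranges claimed.

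For well-definedness the two things to check are that the one-forms $\alpha^n$ and $\beta^n$ are closed modulo $\I{\infty}$, and that the intermediate algebraic operations $a^n\mapsto f^n\mapsto r^n\mapsto s^n$ (and the parallel chain in $\mcn$) produce functions on $\M{\infty}$ of the prescribed weighted degree. Closedness of $\alpha^n$ modulo $\I{\infty}$ is a direct computation from $\ed\alpha^n$ using the structure equations of Prop.~3.1 combined with $\mce(a^n)=0$; the $f_u a^n$ term in $\mce$ is precisely what is needed to cancel the contributions of $\ed e^u$ and of $\ed u_0$ coming from Eq.~\eqref{eq:alpha}. Local triviality $H^1(\M{k},\R)=0$ then produces a primitive $b^n$ which, by the weighted-degree grading of $\ed$, can be chosen uniquely of weight $d+1$. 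The same argument, now using the defining equation for $s^n$ and a parallel cancellation involving $f_u=e^u+2e^{-2u}$, handles $\beta^n$ and produces $t^n$.

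The heart of the proof is showing $\mcp(a^n)\in V_{d+6}$, and I would do this through the Killing-field connection of Sec.~\ref{sec:CLfromKF}. For $\SU(3)/\SO(2)$ the $6$-symmetric space decomposition yields $\g^{\C}=\g_0\oplus\g_1\oplus\cdots\oplus\g_5$ with $\g_0$ the one-dimensional complexified Cartan of $\so(2)$. Fixing a Toda frame realizing the Tzitzeica equation, I would identify the seven-tuple $(a^n,b^n,f^n,r^n,s^n,t^n,a^{n+1})$ with the $\lambda^{6n},\lambda^{6n+1},\ldots,\lambda^{6n+6}$ coefficients of a formal Killing field $X_\lambda$, so that the integration defining $b^n$ (resp.~$t^n$) realizes the $\g_0$-component of Eq.~\eqref{eq:delX} at degree $6n$ (resp.~$6n+4$), while the algebraic links $b^n\to f^n\to r^n\to s^n$ encode Eq.~\eqref{eq:delbX} at the intermediate degrees where the abelian nature of $\g_0$ and of the root-space brackets forces those equations to be algebraic rather than differential. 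Lemma~\ref{lem:g0kf} then gives $\mce(a^{n+1})=0$ directly. The no-mixing condition on $a^{n+1}$ follows by induction: $\alpha^n$ and $\beta^n$ each split into a $(1,0)$-piece containing only holomorphic jets and a $(0,1)$-piece whose only antiholomorphic jet is in the exponential factor, so integrating against $e_{-1}$ or $e_{\ol{-1}}$ cannot introduce mixed second derivatives.

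With step~2 established, the isomorphism claims follow from the normalizations and $\dim_{\C}V_d\leq 1$. For odd $d\notin\{-5,-1\}$ the leading-term bookkeeping of Eq.~\eqref{eq:normalization} (verified by tracking the top weighted-degree monomial through each of the chains \eqref{eq:alpha}--\eqref{eq:beta}) shows that $\mcp$ sends a generator of $V_d$ to a nonzero element of $V_{d+6}$, whence $\mcp$ is injective; surjectivity in the one-dimensional case follows from the identity $\mcp\mcn={\rm Id}$ on $V_{d+6}$ whenever $d+6\notin\{-5,-1,0,1,5\}$, and the remaining low-degree cases can be checked directly against the explicit generators listed in the Example. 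An identical argument with the roles of $\mcp$ and $\mcn$ exchanged handles part (2). The main obstacle I anticipate is precisely the Killing-field bookkeeping of the third paragraph: choosing a Toda frame for $\SU(3)/\SO(2)$ with root-vector normalizations that make the constants in Eqs.~\eqref{eq:alpha}--\eqref{eq:sn} and \eqref{eq:beta} match the matrix entries of $\ed X_\lambda+[\psi_\lambda,X_\lambda]=0$ coefficient by coefficient in $\lambda$. Once this matching is in place the remainder of the argument is formal.
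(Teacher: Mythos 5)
There is a genuine gap at the step where you produce the primitives $b^n$ and $t^n$. You correctly note that $\alpha^n$ and $\beta^n$ are only closed \emph{modulo} $\I{\infty}$, but then you invoke local triviality $H^1(\M{k},\R)=0$ to produce a primitive. That is not enough: a one-form closed modulo the ideal defines a class in the characteristic cohomology $\bar{H}^1=H^1(\Omega/\mci)$, and a function $b^n$ with $\ed b^n \equiv \alpha^n \bmod \I{\infty}$ exists only if that class vanishes. De Rham triviality of $\M{k}$ says nothing about $\bar{H}^1$, which is exactly the nontrivial object the whole paper is computing. The paper's argument is that $\wt(a^n)=d$ is odd, so $\wt(\alpha^n)=d+1$ and $\wt(\beta^n)=d+5$ are even, and Thm.~\ref{thm:MainTheorem}(1) forces $V_{d+1}=V_{d+5}=0$ for nonzero even degree, hence $[\alpha^n]=[\beta^n]=0$ in $\bar{H}^1$ and the primitives exist (uniquely among weighted-homogeneous functions). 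The paper states this explicitly: producing $b^n$ from $a^n$ and $t^n$ from $s^n$ \emph{hinges} on $[\alpha^n]$ and $[\beta^n]$ vanishing in characteristic cohomology. Without the parity argument your construction does not get off the ground.

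Two secondary points. First, your plan to deduce $\mce(a^{n+1})=0$ by identifying the tuple $(a^n,b^n,\ldots,t^n,a^{n+1})$ with the coefficients of a formal Killing field and citing Lemma~\ref{lem:g0kf} requires showing that the recursion output, which is built from only a subset of the Killing field equations \eqref{eq:abc}--\eqref{eq:vabar}, automatically satisfies the remaining ($\ed\zb$-component) equations; the paper instead derives the needed $\delb$-relations for $b^n$ and $s^n$ by repeated application of $\ed^2=0$ and then verifies $\mce(a^{n+1})=0$ directly (Lemma~\ref{lem:ainkernel}). Second, your one-sentence argument for the no-mixing condition is too quick: the paper needs two separate structural lemmas (Lemmas~\ref{lem:btexist} and~\ref{lem:nou}) to show that $b^n$ and $t^n$ are polynomials in the $u_j$ alone (or $e^{ku}$ times polynomials in the $\ub_j$), via induction on the extreme-weight variable and a highest-monomial argument applied to $\mcd(a^n)=0$; the $e^{ku}$ factors must be explicitly killed, they do not disappear for free. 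Your final step (normalization plus $\dim_{\C}V_d\leq 1$ plus $\mcp\mcn=\mcn\mcp={\rm Id}$) does match the paper.
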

Together these recursions provide the
\begin{proof}[Proof of Thm. \ref{thm:Main}]
Starting with $a^0=u_0$, $\mcp^j(a^0)$ will generate bases for $V_{1+6k}$ for $k \geq 0$. Starting with 
\[
a^0 = u_{{4}}+5u_{{2}}u_{{1}}-5u_{{2}}{u_{{0}}}^{2}-
5{u_{{1}}}^{2}u_{{0}}+{u_{{0}}}^{5},
\]
$\mcp^j(a^0)$ will generate bases for $V_{5+6k}$ for $k \geq 0$.

One can check directly that $V_3=0$. Using induction this implies that $V_{6k+3}=0$: For if $P \in V_{6k+3}$ were a nonzero element, then $\mcn(P) \in V_{6k-3}$ would be nonzero.  By applying $\mcn$ repeatedly we would produce a nonzero element of $V_3$, a contradiction.  The $V_{d}$ for $d<0$ are determined by the isomorphism $V_{-d} \cong \ol{V_d}$. 
\end{proof}

The proofs that $\mcp$ and $\mcn$ are well defined and the proof of Thm.~\ref{prop:recursion2} will be presented through a series of Lemmas (see Lemmas~\ref{lem:btdefined} to \ref{lem:nou}).   Before giving the proofs we will derive $\mcp$ and $\mcn$ using the Killing field equation for primitive maps to $\SU(3)/\SO(2)$.  

To begin, we introduce the $6$-symmetric space structure on $\SU(3)/\SO(2)$.  Let $R \in \SO(3)$ be the rotation by $\frac{2 \pi}{6}$ about the axis through $(1,0,0) \in  \C^3$.  On $\SL(3,\C)$ define
$$
\tau(g)=R \trp(g)^{-1} R^{-1}.
$$
This is an order $6$-automorphism.    One computes that on $\sla(3,\C)$ it has eigen spaces
\begin{align}
\g_0 &=\left\{ \bp 0&0&0 \\ 0&0& -A \\ 0& A &0\ep : A \in \C  \right\}  \nonumber \\
\g_1 &=\left\{ \bp 0&- B &-\mo B \\  B & C &-\mo C \\ \mo B &-\mo C &- C \ep : B, C  \in \C  \right\}  \nonumber  \\
\g_2 &=\left\{ \bp 0& F&-\mo F \\  F&0& 0 \\ -\mo F&0 &0\ep  : F \in \C  \right\} \label{eq:eigenspaces} \\
\g_3 &=\left\{ \bp -2R & 0&0 \\ 0&R& 0 \\ 0 &0 &R \ep  : R \in \C  \right\}  \nonumber \\
\g_4 &=\left\{ \bp 0& S& \mo S \\ S&0& 0 \\ \mo S&0 &0 \ep : S \in \C  \right\}   \nonumber \\
\g_5 &=\left\{ \bp 0&- T& \mo T \\ T& V & \mo V \\ -\mo T& \mo V &- V \ep : T,V  \in \C   \right\} . \nonumber  
\end{align}
Thus the stabilizer is the $\SO(2) \subset \SU(3) $ that fixes $(1,0,0)$.

We define the $\su(3,\C)$-valued connection on $\M{\infty}$ 
\begin{align}
\psi&=\psi_{-1}+\psi_{1}+\psi_0\nonumber\\
&=\frac{1}{2}\bp 0 &-\zeta_1 &\mo \zeta_1 \\ \zeta_1 &\mo \zeta_2 &-\zeta_2 \\ -\mo \zeta_1& -\zeta_2&-\mo \zeta_2 \ep+\frac{1}{2}\bp 0 &-\zetab_1 &-\mo \zetab_1 \\ \zetab_1 &\mo \zetab_2 &\zetab_2 \\ \mo \zetab_1& \zetab_2 & -\mo \zetab_2  \ep\\
&+\bp 0 &0&0 \\ 0& 0 & -\mo \theta \\ 0& \mo \theta & 0 \ep\nonumber
\end{align} 
where $\zeta_1=\sqrt{2}e^{\frac{u}{2}} \ed z$, $ \zeta_2 = e^{-u}\ed z$, and $\theta=\frac{1}{2}(u_{\ol{0}} \ed \zb - u_0 \ed z)$.  One checks that $\psi_{\lambda}=\lambda^{-1}\psi_{-1}+\lambda\psi_{1}+\psi_0$ is flat modulo the ideal $\I{\infty}$ for any $\lambda \in \C^*$.

Let $X_{\lambda}:\M{\infty} \to \sla(3,\C)$ be a formal Killing field for the $\SU(3)/\SO(2)$ primitive map system, that is, it satisfies Eq.~\eqref{eq:KillingEquation}.  We are working with a $6$-symmetric space so we have 
\be
X_{\lambda}=\sum_{n=-\infty}^{\infty}\left( X^{6n}\lambda^{6n}+X^{6n+1}\lambda^{6n+1}+\cdots+X^{6n+4}\lambda^{6n+4}+X^{6n+5}\lambda^{6n+5}\right)
\ee
with $X^{kn+j}$ taking values in the $\g_j$ specified in Eq.~\eqref{eq:eigenspaces}. We parametrize a Killing field, according to the eigenspaces in Eq.~\eqref{eq:eigenspaces}, as 
\begin{align*}
X^{6n} &=\bp 0&0&0 \\ 0&0& -a^n \\ 0& a^n &0\ep  \nonumber \\
X^{6n+1} &=\bp 0&- e^{-\frac{u}{2}}b^n &-\mo e^{-\frac{u}{2}}b^n \\  e^{-\frac{u}{2}}b^n & e^{u}c^n &-\mo e^{u}c^n \\ \mo e^{-\frac{u}{2}}b^n &-\mo e^{u}c^n &- e^{u}c^n \ep   \nonumber  \\
X^{6n+2} &= \bp 0& f&-\mo f \\  f&0& 0 \\ -\mo f&0 &0\ep  \\
X^{6n+3} &= \bp -2r & 0&0 \\ 0&r& 0 \\ 0 &0 &r \ep    \nonumber \\
X^{6n+4} &= \bp 0& e^{\frac{u}{2}}s^n& \mo e^{\frac{u}{2}}s^n \\ e^{\frac{u}{2}}s^n&0& 0 \\ \mo e^{\frac{u}{2}}s^n&0 &0 \ep \nonumber \\
X^{6n+5} &= \bp 0&- e^{\frac{u}{2}}t^n& \mo e^{\frac{u}{2}}t^n \\ e^{\frac{u}{2}}t^n& v^n & \mo v^n \\ -\mo e^{\frac{u}{2}}t^n& \mo v^n &- v^n \ep  . \nonumber  
\end{align*}
The factors of $e^{ku}$ with $k \in \Z[\frac{1}{2}]$ are inserted for convenience.  One computes that Eq.~\eqref{eq:KillingEquation} decomposes into the $\ed z$
\begin{align}
a^n_{-1}&+\mo \sqrt{2}b^n-2c^n=0 \label{eq:abc}\\
b^n_{-1}&-u_0 b^n+\mo e^{-\frac{u}{2}}f^n=0 \label{eq:bf}\\
c^n_{-1}&+2u_0 c^n +\sqrt{2}e^{-\frac{u}{2}}f^n=0 \label{eq:cf}\\
f^n_{-1}&+\frac{1}{2}u_0 f^n-\frac{3}{\sqrt{2}} e^{\frac{u}{2}}r_n=0 \label{eq:f}\\
r^n_{-1}&+\sqrt{2}e^u s^n=0  \label{eq:r}\\
s^n_{-1}&-\sqrt{2}v^n+\mo e^{-u}t^n=0 \label{eq:stv} \\
t^n_{-1}&+u_0 t^n -\frac{\mo}{\sqrt{2}}a^{n+1}=0 \label{eq:ta}\\
v^n_{-1}&-u_0 v^n-e^{-u}a^{n+1} =0\label{eq:va}
\end{align} 
and $\ed \zb$ components
\begin{align}
a^n_{\ol{-1}}&-\mo\sqrt{2}e^u t^{n-1}+2e^{-u}v^{n-1}=0  \label{eq:abcbar}\\
b^n_{\ol{-1}}&+\frac{\mo}{\sqrt{2}} e^{u}a^n=0 \label{eq:babar}\\
c^n_{\ol{-1}}&+ e^{-2u}a^n=0  \label{eq:cfbar}\\
f^n_{\ol{-1}}&-\frac{1}{2}\ol{u_0}f^n+ \mo e^{\frac{-3u}{2}}b^n-\sqrt{2}e^{\frac{3u}{2}}c^n=0 \label{eq:fbar} \\
r^n_{\ol{-1}}&+\sqrt{2}e^{\frac{u}{2}}f^n=0 \label{eq:rbar}\\
s^n_{\ol{-1}}&+\ol{u_0}s^n-\frac{3}{\sqrt{2}}r^n=0  \label{eq:stvbar}\\
t^n_{\ol{-1}}&+\mo e^{-u}s^n =0 \label{eq:tabar}\\
v^n_{\ol{-1}}&+\ol{u_0}v^n+\sqrt{2}e^u s^n =0\label{eq:vabar}
\end{align} 
Recall that $a^n_{-1}=e_{-1}(a^n)$ and $a^n_{\ol{-1}}={\ol{e_{-1}}}(a^n)$ where $e_{-1}$ is the vector field defined in Eq.~\eqref{eq:vectorfields}.  

We begin with the recursion $\mcp$.  First, Eq.~\eqref{eq:abc} can be arranged as
\be\label{eq:c}
c^n= \frac{1}{2}(a^n_{-1}+\mo \sqrt{2} b^n).
\ee
Eliminating $f^n$ from Eq. \eqref{eq:bf} and Eq. \eqref{eq:cf} results in
\be
{\mo}\sqrt{2}(b^n_{-1} - u_0 b^n)+( c^n_{-1} + 2 u_0 c^n)=0.
\ee
Substituting into this the expression for $c^n$ from Eq.~\eqref{eq:c}, we obtain 
\be
b^n_{-1}=\frac{\mo}{3\sqrt{2}}(a^n_{-1,-1}+2u_0 a^n_{-1}).
\ee
From the $\delb$ equations we use Eq.~\eqref{eq:babar}
\be
b^n_{\ol{-1}}=-\frac{\mo}{\sqrt{2}}e^u a^n.\nonumber
\ee
The two expressions for the derivatives of $b^n$ are, up to a factor of $3$, equivalent to
\be\label{eq:tzitzeicabfroma}
\ed b^n \equiv \alpha^n \;\; {\rm mod}\; \I{\infty} \nonumber
\ee
for
\be
\alpha^n = \frac{\mo}{\sqrt{2}}(a^n_{-1,-1}+2u_0 a^n_{-1}) \ed z -\frac{3\mo}{\sqrt{2}}e^u a^n \ed \zb.
\ee
Eqs.~\eqref{eq:bf},\eqref{eq:f},\eqref{eq:r} are, up to a factor of $3$ in the second of them, 
\begin{align}
f^n&=\mo e^{\frac{u}{2}}(b^n_{-1} - u_0 b^n)\\
r^n&= \sqrt{2}e^{-\frac{u}{2}}(f^n_{-1}+\frac{1}{2} u_0 f^n)\\
s^n&=-\frac{1}{\sqrt{2}} e^{-{u}} r^n_{-1}.
\end{align}
Eq.~\eqref{eq:stv} can be expressed as
\begin{equation}\label{eq:v}
v^n=\frac{1}{\sqrt{2}} (\mo e^{-u}t^n+s^n_{-1})
\end{equation}
Eliminating $a^{n+1}$ from Eq.~\eqref{eq:ta} and  Eq.~\eqref{eq:va} leads to 
\begin{equation}
\mo \sqrt{2} \left(t^n_{-1}  + u_0 t^n\right)+e^u\left( v^n_{-1}-u_0 v^n \right)=0.
\end{equation}
Substituting the expression for $v^n$ from Eq.~\eqref{eq:v} into this collapses the equation down to
\be
t^n_{-1}=\frac{\mo}{3}e^u(s^n_{-1,-1}-u_0 s^n_{-1}).
\ee
This expression for $t^n_{-1}$ along with Eq.~\eqref{eq:tabar} suggests that we define
\be
\beta^n= \mo e^u(s^n_{-1,-1}-u_0 s^n_{-1}) \ed z  - 3\mo e^{-u}s^n \ed \zb.
\ee
Then the expressions for the derivatives of $t^n$ are equivalent to 
\be
\ed t^n \equiv \beta^n  \;\; {\rm mod}\; \I{\infty}.
\ee

Finally, Eq.~\eqref{eq:ta}, can be expressed as
\be
a^{n+1}=- \sqrt{-2}(t^n_{-1}+u_0 t^n).
\ee

To summarize, the Killing field equations can be arranged as a recursion for the sequence
\begin{equation}
\cdots \to a^n \to b^n \to f^n \to r^n \to s^n \to t^n \to a^{n+1} \to \cdots
\end{equation}
Producing $b^n$ from $a^n$ and $t^{n}$ from $s^n$ hinge on $[\alpha^n]$ and $[\beta^n]$ vanishing in characteristic cohomology.   The other stages only require differentiation.  The definitions imply that at each stage the weighted-degree increases by one (we set all additive constants to be zero so that the functions are weighted-homogeneous).  Thus $\wt(a^{n+1})=\wt(a^n)+6$.   

We now present lemmas which will prove that the map $\mcp$ is well defined and an isomorphism for the appropriate degrees.  We will use the notation $A \in  \langle u_j \rangle$ to mean that the function $A$ on $\M{\infty}$ only involves the variables $u_0,u_1,\ldots$ but not $u,\ub_0,\ub_1,\ldots$ Similar notation will be used to indicate any other dependence or independence that is relevant.   
\begin{lemma}\label{lem:btdefined}
If $\mce(a^n)=0$ then $b^n$ and $t^n$ are well-defined weighted homogeneous functions on $\M{\infty}$. 
\end{lemma}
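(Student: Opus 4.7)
The plan is to deduce the well-definedness of $b^n$ and $t^n$ from the vanishing of characteristic cohomology classes in the appropriate weighted degrees.  I will first verify that $\alpha^n$ and $\beta^n$ are closed modulo $\mci^{(\infty)}$, so that they represent classes in $\bar{H}^1_{\C}$, and then invoke Theorem~\ref{thm:MainTheorem} to conclude that these classes vanish, yielding the required primitives.

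For closedness of $\alpha^n$, write $\alpha^n \equiv F\zeta + G\bar\zeta \mod \mci^{(\infty)}$ with $F = \frac{\mo}{\sqrt{2}}(a^n_{-1,-1} + 2u_0 a^n_{-1})$ and $G = -\frac{3\mo}{\sqrt{2}}e^u a^n$, so that $\ed \alpha^n \equiv (e_{-1}G - e_{\ol{-1}}F)\,\zeta\w\bar\zeta \mod \mci^{(\infty)}$.  A quick check against the structure equations on $\M{\infty}$ shows $[e_{-1},e_{\ol{-1}}]=0$ on functions, so mixed derivatives of $a^n$ commute freely.  The hypothesis $\mce(a^n)=0$ then allows me to substitute $e_{\ol{-1}}e_{-1}a^n = -f_u a^n$ throughout the expansion of $e_{\ol{-1}}F$, and the remaining terms collapse using the Tzitzeica identity $f_u + 2f = 3e^u$ (which is exactly the content of $f_{uu} = -f_u + 2f$ combined with $f = e^u - e^{-2u}$).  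This cancellation is the direct analogue of the one in the proof of Lemma~\ref{lem:KFCL}.

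With closedness in hand, $[\alpha^n]$ is a weighted-homogeneous element of $\bar{H}^1_{\C}$ of degree $d+1$, where $d = \wt(a^n)$ is odd.  Thus $d+1$ is even and, away from the exceptional case $d=-1$, nonzero, so Theorem~\ref{thm:MainTheorem} gives $V_{d+1} = 0$ and hence $[\alpha^n] = 0$.  This produces a function $b^n$ with $\ed b^n \equiv \alpha^n \mod \mci^{(\infty)}$.  Weighted homogeneity of degree $d+1$ pins $b^n$ down uniquely: any additive ambiguity would be annihilated by both $e_{-1}$ and $e_{\ol{-1}}$ at a nonzero weight, which forces it to vanish.

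The intermediate functions $f^n$, $r^n$, $s^n$ are defined explicitly from $b^n$ by the formulas \eqref{eq:fn}--\eqref{eq:sn}, so no further existence issue arises.  For $t^n$ one reruns the same argument on $\beta^n$: verify $\ed \beta^n \equiv 0 \mod \mci^{(\infty)}$ by propagating $\mce(a^n)=0$ through the chain $a^n \to b^n \to f^n \to r^n \to s^n$ together with the Tzitzeica identities, then apply Theorem~\ref{thm:MainTheorem} at weighted degree $d+5$ (even and nonzero unless $d=-5$) to conclude $[\beta^n] = 0$ and extract $t^n$.  The main obstacle is this final closedness calculation, which is lengthy to carry out by hand; however, the conceptual reason for it is transparent --- the entire chain $a^n \to b^n \to \cdots \to t^n \to a^{n+1}$ was engineered to reproduce the $\g_j$-components of a formal Killing field, so the consistency relations among the intermediate functions are forced by $\ed X_{\lambda} + [\psi_{\lambda},X_{\lambda}] = 0$ once the $\g_0$-component satisfies its linearized equation.
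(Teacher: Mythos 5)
Your proof follows essentially the same route as the paper's: show $\ed\alpha^n\equiv\ed\beta^n\equiv 0\bmod\mci^{(\infty)}$ so that $[\alpha^n],[\beta^n]\in\bar{H}^1_{\C}$ sit in even weighted degree, then invoke the even-degree vanishing of Theorem~\ref{thm:MainTheorem} to produce the unique weighted-homogeneous primitives $b^n,t^n$. The only differences are cosmetic and in your favor: you sketch the closedness computation directly from $\mce(a^n)=0$ and the identity $f_u+2f=3e^u$ (where the paper simply cites the Killing field equations \eqref{eq:abc}--\eqref{eq:vabar}), and you correctly flag the exceptional degrees $d=-1,-5$ where $V_0\neq 0$, which the paper only excludes later in Proposition~\ref{prop:recursion2}.
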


Thus $\mcp: \Omega^0(\M{\infty},\C) \to \Omega^0(\M{\infty},\C)$ is well-defined.
\begin{lemma}\label{lem:ainkernel}
If $a^n \in \ker(\mcd)$ and $a^{n+1}=\mcp(a^n)$, then $\mcd(a^{n+1})=0$.
\end{lemma}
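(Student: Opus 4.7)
The plan is to interpret the recursion $\mcp$ as a concrete integration of the formal Killing field equations \eqref{eq:abc}--\eqref{eq:vabar} for the $\SU(3)/\SO(2)$ primitive map system. If $a^n$ plays the role of the $\g_0$-component of a formal Killing field, then the auxiliary functions $b^n, c^n, f^n, r^n, s^n, t^n, v^n$ together with $a^{n+1}$ assemble into six consecutive $\g_j$-components $X^{6n+1},\ldots,X^{6n+6}$. Once one verifies that these components satisfy the full Killing field equation \eqref{eq:KillingEquation} at the relevant powers of $\lambda$, Lemma~\ref{lem:g0kf} immediately delivers $\mcd(a^{n+1})=0$; and for the Tzitzeica equation the operator $\mcd$ of \eqref{eq:EDforKF} coincides with the scalar operator $\mce$ of \eqref{eq:ED}, so this is what is wanted.

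Concretely, I would proceed in three stages. First, by Lemma~\ref{lem:btdefined} the functions $b^n$ and $t^n$ are well-defined, while the remaining pieces $c^n, f^n, r^n, s^n, v^n$ are determined by the explicit pointwise formulas built into $\mcp$. Second, observe that by construction the eight $\ed z$ Killing-field equations \eqref{eq:abc}--\eqref{eq:va} hold, as do the two $\ed\zb$ equations \eqref{eq:babar} and \eqref{eq:tabar} that are encoded in the primitives $\alpha^n$ and $\beta^n$. Third, verify the remaining $\ed\zb$ equations \eqref{eq:abcbar}, \eqref{eq:cfbar}, \eqref{eq:fbar}, \eqref{eq:rbar}, \eqref{eq:stvbar}, \eqref{eq:vabar} as cross-derivative compatibility conditions, by applying $e_{\ol{-1}}$ to the matching $\ed z$-equation, using the commutation $[e_{-1},e_{\ol{-1}}]=0$ together with the Tzitzeica identity $u_{0,\ol{-1}}=e^{-2u}-e^u$, and invoking the input hypothesis.

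The main obstacle is this third stage: propagating the single hypothesis $\mce(a^n)=0$ through the six-step chain of compatibility checks. The hypothesis enters at the very top as the integrability of $\alpha^n$ (it is what makes $b^n$ exist in Lemma~\ref{lem:btdefined}), and I expect it to cascade systematically down the chain because the recursion is nothing but a finite-dimensional slice of the already-integrable primitive map system; the bookkeeping is of the same kind as that used to verify the flatness of the loop $\psi_\lambda$ on $\M{\infty}$. Once the chain closes, Lemma~\ref{lem:g0kf} applied to the assembled partial Killing field yields $\mcd(a^{n+1})=0$, equivalently $\mce(a^{n+1})=0$, completing the proof.
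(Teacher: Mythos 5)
Your overall strategy --- regard the output of $\mcp$ as a finite slice of a formal Killing field and deduce $\mcd(a^{n+1})=0$ from the Killing-field structure --- is the right idea, and your ``third stage'' (verifying the remaining $\ed\zb$ equations as cross-derivative compatibility conditions) is essentially what the paper's proof carries out concretely: it applies $\ed^2=0$ repeatedly to derive the mixed derivatives of $b^n$ and then of $s^n$, and from those together with the definition of $a^{n+1}$ in terms of $t^n$ checks $\mce(a^{n+1})=0$ directly. However, your final step has a genuine gap. Lemma~\ref{lem:g0kf} is proved by computing $\delb\del X^{kn}$, and the first move in that computation replaces $\del X^{kn}$ by $-[\psi_{-1},X^{kn+1}]$ using the component one rung \emph{above} the $\g_0$-component in the $\lambda$-grading. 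Applied to $a^{n+1}=X^{6(n+1)}$ this requires $X^{6(n+1)+1}$, i.e.\ $b^{n+1}$ and $c^{n+1}$ --- components your slice $X^{6n+1},\ldots,X^{6n+6}$ does not contain, and whose construction (Lemma~\ref{lem:btdefined} applied at level $n+1$, i.e.\ the closedness of $\alpha^{n+1}$) presupposes the very conclusion $\mce(a^{n+1})=0$. As written, the last step is circular.

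The fix is to run the conjugated computation: express $\delb a^{n+1}$ via \eqref{eq:abcbar} in terms of $t^n$ and $v^n$ (which you must first define, along with $c^n$, through \eqref{eq:v} and \eqref{eq:c}, since $\mcp$ does not produce them), then apply $\del$ and use \eqref{eq:ta} and \eqref{eq:va} to close the loop back onto $a^{n+1}$. That uses only components already built, and it is in effect what the paper does by deriving the identities for $s^n_{\ol{-1}}$, $s^n_{-1,\ol{-1}}$, and so on. Two smaller points: the normalizations in $\mcp$ differ from the raw Killing-field equations by factors of $3$ (the paper flags this ``up to a factor of $3$''), so the slice satisfies rescaled versions of \eqref{eq:abc}--\eqref{eq:vabar}; and the compatibility checks of your stage three are the real content of the lemma and cannot be discharged by appeal to the ``already-integrable primitive map system,'' since $a^n$ here is an arbitrary element of $\ker(\mcd)$ on $\M{\infty}$, not a component of a pre-existing Killing field.
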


Together Lem.~\ref{lem:btdefined} and \ref{lem:ainkernel} imply that
\[
\mcp:\ker(\mcd) \to \ker(\mcd)
\]
is well-defined.  What is left is to show that if $a^n \in V_d$ then $\mcp(a^n) \in V_{d+6}$.  Heading in this direction, we have
\begin{lemma}\label{lem:btexist}
We have the following results on $b^n$ and $t^n$ \upshape{:}
\begin{enumerate}
\item If $0 \neq a^n \in V_d$ for $d>0$  then $0 \neq b^n \in \langle u_j \rangle$ is a weighted homogeneous polynomial of degree $d+1$.
\item If $0 \neq a^n \in V_d$ for $d<-5$ then $0 \neq e^{-u}b^n \in \langle \ub_j \rangle$ is a weighted homogeneous polynomial of degree $d+1$. 
\item If $0 \neq a^n \in V_d$ for $d>0$ then $0 \neq t^n \in \langle u_j \rangle$ is a weighted homogeneous polynomial of degree $d+5$.
\item If $0 \neq a^n \in V_d$ for $d<-5$ then $0 \neq e^{u}t^n \in \langle \ub_j \rangle$ is a weighted homogeneous polynomial of degree $d+5$. 
\end{enumerate}
\end{lemma}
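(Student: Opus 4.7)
I would prove (1)--(4) by constructing explicit representatives in the claimed rings and invoking the uniqueness of $b^n, t^n$ guaranteed by Lemma \ref{lem:btdefined}. Complex conjugation exchanges the roles of $\mcp, \mcn$ and of $u_j, \bar u_j$, so parts (2) and (4) reduce to (1) and (3) by conjugation; I therefore concentrate on $d > 0$.

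The key algebraic ingredient is the Tzitzeica-specific splitting $T^i = A^i e^u + B^i e^{-2u}$ with $A^i, B^i \in \R[u_0, u_1, \ldots]$, established by induction on $i$ from $T^0 = f = e^u - e^{-2u}$. For $a^n \in V_d \cap \langle u_j\rangle$, substituting this decomposition into $\mce(a^n) = e_{\bar{-1}} e_{-1}(a^n) + (e^u + 2 e^{-2u}) a^n = 0$ and separating the two linearly independent exponentials yields the pair of identities
\[
\textstyle\sum_i A^i\,\partial a^n_{-1}/\partial u_i = a^n, \qquad \sum_i B^i\,\partial a^n_{-1}/\partial u_i = 2 a^n.
\]
These will be the algebraic levers that make the construction close inside the polynomial ring $\langle u_j\rangle$.

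For (1), I would produce a candidate $b^n_\ast \in \langle u_j\rangle$ of weighted degree $d+1$ satisfying both components of $\ed b^n_\ast \equiv \alpha^n \bmod \I{\infty}$. Because $b^n_\ast$ is assumed to have no $u$ or $\bar u_j$ dependence, the $\ed \bar z$-component reduces to $-\sum_i T^i\,\partial b^n_\ast/\partial u_i = -\tfrac{3\mo}{\sqrt{2}}e^u a^n$, which splits via the same $e^u$ and $e^{-2u}$ argument into $\sum_i A^i\,\partial b^n_\ast/\partial u_i = \tfrac{3\mo}{\sqrt{2}} a^n$ and $\sum_i B^i\,\partial b^n_\ast/\partial u_i = 0$; the $\ed z$-component is $\sum_i u_{i+1}\,\partial b^n_\ast/\partial u_i = \tfrac{\mo}{\sqrt{2}}(a^n_{-1,-1} + 2 u_0 a^n_{-1})$. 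Existence of $b^n_\ast$ solving all three equations simultaneously, on the finite-dimensional space of weight-$(d+1)$ monomials in $u_0, u_1, \ldots$, is the technical heart of the argument; it would follow from the two identities above together with the integrability $\ed \alpha^n \equiv 0 \bmod \I{\infty}$ guaranteed by Lemma \ref{lem:btdefined}, which together encode the compatibility of the three PDEs. Uniqueness from Lemma \ref{lem:btdefined} then gives $b^n = b^n_\ast \in \langle u_j\rangle$. Non-vanishing of $b^n$ when $a^n \ne 0$ is read off by tracking the top-index monomial: if $a^n$ contains a nonzero multiple of $u_{d-1}$, the construction produces a nonzero multiple of $u_d$ in $b^n$.

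For (3), I would propagate the conclusion along the chain $b^n \to f^n \to r^n \to s^n$ via direct computation: the $e^{\pm u/2}, e^{-u}$ prefactors in the defining formulas are tailored so that the $u_0 \partial_u$ contributions in $e_{-1}$ cancel, yielding $f^n \in e^{u/2}\langle u_j\rangle$, $r^n \in \langle u_j\rangle$, and $s^n \in e^{-u}\langle u_j\rangle$. Consequently $\beta^n$ has $\ed z$-coefficient in $\langle u_j\rangle$ and $\ed \bar z$-coefficient in $e^{-2u}\langle u_j\rangle$, and I would construct $t^n \in \langle u_j\rangle$ of weighted degree $d+5$ by the exact parallel of the argument for $b^n$, with the role of the Tzitzeica identities now played by the analogous identities for the $\mce$-solution extracted from $s^n$. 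The main obstacle is the existence of $b^n_\ast$ (and its analog $t^n_\ast$) as a weighted polynomial satisfying the full system: it is precisely because $f = e^u - e^{-2u}$ involves exactly two exponentials with compatible exponents that $\sum_i T^i \partial/\partial u_i$ decomposes cleanly on $\langle u_j\rangle$, and it is only the two identities derived from $\mce(a^n) = 0$ that guarantee the candidate can match both components of $\alpha^n$ at once; for generic $f$ the candidate would necessarily acquire $u$-dependence and escape $\langle u_j\rangle$.
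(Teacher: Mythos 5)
Your strategy --- build a candidate $b^n_\ast \in \langle u_j\rangle$ and invoke the uniqueness from Lemma~\ref{lem:btdefined} --- is a genuinely different route from the paper's, which works in the opposite direction: it takes the $b^n$ already known to exist and strips away its dependence on $z,\zb$, on the $\ub_j$, and on $u$. But your version has a gap precisely where you place ``the technical heart.'' You never establish that the overdetermined system (the $\ed z$-component together with the two equations obtained by separating $e^u$ and $e^{-2u}$ in the $\ed\zb$-component) admits a solution \emph{inside} $\langle u_j\rangle$. The integrability $\ed\alpha^n \equiv 0 \bmod \I{\infty}$ only guarantees that some function $b^n$ on $\M{\infty}$ exists --- that is Lemma~\ref{lem:btdefined}, already in hand --- and says nothing about solvability within the subring; the two identities you extract from $\mce(a^n)=0$ are necessary consequences of the hypothesis, not a construction. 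Since producing the polynomial candidate is the entire content of the lemma, asserting that it ``would follow from'' compatibility leaves the lemma unproved. The paper closes exactly this gap by an elementary elimination: first $b^n_z=b^n_{\zb}=0$ (no exact forms lie in $\I{\infty}$, and $\alpha^n$ has no constant-coefficient $\ed z$ or $\ed\zb$ terms); then, writing $b^n=\sum_l b^{n,l}\,\ub_{2n}^l$ in the barred variable of lowest weighted degree appearing in $b^n$ and comparing coefficients in the $\del$-component of $\ed b^n\equiv\alpha^n$ (whose right-hand side lies in $\langle u_j\rangle$), one kills the $\ub_j$-dependence by induction; finally a power-series argument in $u$ against the $\delb$-component removes the $u$-dependence. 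No explicit splitting $T^i=A^ie^u+B^ie^{-2u}$ is needed.

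The proposed reduction of (2) and (4) to (1) and (3) by complex conjugation also does not go through as stated: conjugating the chain of Killing-field equations converts the intermediate quantities of $\mcp$ applied to $a^n\in V_d$, $d<-5$, into intermediate quantities of the \emph{other} recursion $\mcn$ applied to $\ol{a^n}\in V_{-d}$, not of $\mcp$ applied to a positive-degree element; so (2) is not literally an instance of (1). The paper accordingly treats $d<-5$ directly, and that case genuinely needs an extra step your sketch omits: the elimination argument there only yields $b^n\in\langle u,\ub_j\rangle$, and the explicit factor $e^u$ is then pinned down via the commutator identity $[\partial/\partial u, e_{\ol{-1}}]b^n=0$ combined with the $\delb$-equation $b^n_{\ol{-1}}=\tfrac{3\mo}{\sqrt2}e^u a^n$, giving $e_{\ol{-1}}(b^n_u-b^n)=0$ and hence $b^n=e^u\bh^n$ with $\bh^n\in\langle\ub_j\rangle$. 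Your propagation of the result along $b^n\to f^n\to r^n\to s^n$ for part (3) is correct as far as it goes, but it again terminates in an unproved existence claim for $t^n_\ast$.
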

Lem.~\ref{lem:btexist} in conjunction with the recursion formula now imply that 
\[\mcp(a^n)=\sum_{l=1}^n \left(e^{k_l u}A^l \right).
\]
However we also know that $\mcd(a^{n+1})=0$.
\begin{lemma}\label{lem:nou}
Suppose that
\[a^n=\sum_{l=1}^n \left(e^{k_l u}A^l \right)
\]
with each $A^l \in  \langle u_j,\ub_j \rangle$ a weighted homogeneous polynomial, $k_l \in \Z$, and $k_m \neq k_l$ if $k \neq l$.  If $\mcd(a^n)=0$ then $a^n_u=0$ and $a^n$ is either a polynomial exclusively in $u_j$ or exclusively in $\ub_j$.  
\end{lemma}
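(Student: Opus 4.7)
The hypothesis $\mce(a^n)=0$ (which coincides with $\mcd(a^n)=0$ in the scalar Tzitzeica case) can be analyzed by decomposing $\mce = e_{\bar{-1}}e_{-1}+(e^u+2e^{-2u})$ according to how it shifts the exponent $e^{ku}$. Writing $T^j = a_j e^u + b_j e^{-2u}$, with $a_j$ the complete Bell polynomial in $u_0,u_1,\ldots$ (arising as $e^{-u}\partial_z^j(e^u)$) and $b_j$ its analogue for $e^{-2u}$, and similarly for $\bar T^j$, one finds that $e_{-1}$ and $e_{\bar{-1}}$ each split into three components shifting the $u$-grade by $\{0,+1,-2\}$. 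Consequently
\[ \mce = \mce_{-4}+\mce_{-2}+\mce_{-1}+\mce_0+\mce_{+1}+\mce_{+2}, \]
where $\mce_m$ maps $e^{ku}\C[u_j,\bar u_j]$ into $e^{(k+m)u}\C[u_j,\bar u_j]$. The assumption $\mce(a^n)=0$ yields one equation per $e^{Nu}$, namely $\sum_l \mce_{N-k_l}(A^l;k_l) = 0$ summed over those $l$ with $N-k_l \in \{-4,-2,-1,0,+1,+2\}$. Direct calculation gives $\mce_{+2}(A)=D\bar DA$ and $\mce_{-4}(A)=E\bar EA$, where $D=\sum_j a_j\partial_{u_j}$, $E=\sum_j b_j\partial_{u_j}$, and $\bar D,\bar E$ are their conjugates acting on $\bar u_j$; meanwhile $\mce_0(A;k)=k^2 u_0\bar u_0 A+k(\bar u_0 P A+u_0\bar P A)+\bar P P A$ with $P=\sum_i u_{i+1}\partial_{u_i}$.

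The extreme equations at $N=k_{\max}+2$ and $N=k_{\min}-4$ isolate $A^{l_{\max}}$ and $A^{l_{\min}}$, giving $D\bar DA^{l_{\max}}=0$ and $E\bar EA^{l_{\min}}=0$. To force $k_l=0$ for every $l$, I would turn to the equation at $N=k_{\max}$: it contains the distinctive $k_{\max}^2 u_0\bar u_0 A^{l_{\max}}$ term coming from $\mce_0(A^{l_{\max}};k_{\max})$, plus at worst contributions $\mce_{+1}(A^{l};k_{\max}-1)$ and $\mce_{+2}(A^{l};k_{\max}-2)$ from neighboring $k_l$-values (if present). A filtration by polynomial degree in $u_0$ and $\bar u_0$, supplemented by the commutator identity $[D,P]=u_0 D$ (which follows from the Bell polynomial recursion $P(a_j)=a_{j+1}-u_0 a_j$), shows that the $k_{\max}^2 u_0\bar u_0 A^{l_{\max}}$ term cannot be cancelled by the neighboring contributions, forcing $k_{\max}=0$. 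A symmetric argument from the bottom forces $k_{\min}=0$, and induction on the number of distinct $k_l$-values then reduces us to $a^n=A(u_j,\bar u_j)$.

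The surviving $\mce_0$-equation reads $\bar P P A=0$. I would prove the structural sublemma that $\ker(\bar P P)=\C[u_j]+\C[\bar u_j]$ on $\C[u_j,\bar u_j]$: since $P|_{\C[u_j]}$ strictly raises the top jet index, $\ker P|_{\C[u_j]}=\C$, and thus $P A \in \ker\bar P \cap \C[u_j,\bar u_j]=\C[u_j]$. Iterating via the partial derivatives $\partial_{\bar u_l}$ gives $\partial_{u_i}\partial_{\bar u_l}A=0$ for all $i,l$, hence $A\in\C[u_j]+\C[\bar u_j]$. Weighted homogeneity of nonzero weight $d$ then selects a single summand: $\C[u_j]$ if $d>0$ (since $\C[\bar u_j]$ carries only nonpositive weights) or $\C[\bar u_j]$ if $d<0$, which is exactly the desired conclusion.

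The main obstacle is the polynomial-degree argument forcing $k_{\max}=0$. The extremal constraint $D\bar DA^{l_{\max}}=0$ is too weak on its own, since $D$ has nontrivial kernel (for instance $u_1-\tfrac12u_0^2\in\ker D$), so one must combine it with the $N=k_{\max}+1$ and $N=k_{\max}$ equations, carefully tracking how $u_0$- and $\bar u_0$-degrees propagate through $\mce_0,\mce_{+1},\mce_{+2}$ using the Bell polynomial commutator structure.
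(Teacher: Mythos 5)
Your $e^{ku}$-graded decomposition of $\mce$ and your final analysis of $\ker(\ol{P}P)$ are both sound, but the proof has a genuine gap exactly where you flag it: nothing is actually proved that forces $k_{\max}=0$. The claim that the term $k_{\max}^2\,u_0\ol{u_0}A^{l_{\max}}$ in the $N=k_{\max}$ equation ``cannot be cancelled by the neighboring contributions'' is not justified, and it is genuinely at risk: with $a_0=1$, $a_1=u_0$ one has $D\ol{D}C\supset a_1\ol{a_1}C_{u_1\ub_1}=u_0\ol{u_0}C_{u_1\ub_1}$ from the $\mce_{+2}$ contribution of the level $k_{\max}-2$, and the $\mce_{+1}$ contribution of the level $k_{\max}-1$ contains $-k\ol{u_0}\ol{D}B$ and $-(k-1)u_0DB$, which produce $u_0\ol{u_0}$-monomials whenever $B$ has mixed $u_j,\ub_j$ dependence. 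So ruling out cancellation requires precisely the control over mixed dependence that your argument postpones to the very end; as ordered, the hard step is attempted with the least information available.

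The paper avoids this by reversing the order of the two steps. First it kills all mixed dependence: taking $u_p$ of highest index in $a^n$ and $\ub_q$ of highest index occurring in the terms with maximal $u_p$-power, the monomial $u_{p+1}\ol{u}_{q+1}$ can only be produced by the grade-preserving part $\ol{P}P$ of $\mce$ (the operators $D,E,\ol{D},\ol{E}$ never introduce $u_{p+1}$ or $\ol{u}_{q+1}$), so it survives level-by-level in the $e^{ku}$-grading and cannot cancel; by induction $a^n_{u_i\ub_j}=0$ and homogeneity puts each $A^l$ in $\langle u_j\rangle$ or $\langle\ub_j\rangle$. Only then does it attack the exponentials: once, say, each $A^l\in\langle\ub_j\rangle$, the full coefficient of $u_0$ in $\mce(a^n)$ is exactly $\sum_l k_l e^{k_l u}\bigl(k_l\ol{u_0}A^l+\ol{P}A^l\bigr)$, and since $\ol{P}$ preserves polynomial degree while multiplication by $\ol{u_0}$ raises it, the top-degree part gives $k_l\ol{u_0}A^l_{\rm top}=0$, hence $A^l=0$ for $k_l\neq 0$. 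If you want to salvage your route, you should import this reordering; otherwise the degree-filtration argument for $k_{\max}=0$ must be written out in full, and I do not see that it closes.
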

Thus together, Lem.~\ref{lem:btdefined} to~\ref{lem:nou} prove Prop.~\ref{prop:recursion2}.  We now give the proofs of Lem.~\ref{lem:btdefined} to~\ref{lem:nou}

\begin{proof}[Proof of Lem.~\ref{lem:btdefined}]
The Killing field equations Eq.~\eqref{eq:abc} to \eqref{eq:vabar} imply that $\ed \alpha^n \equiv \ed \beta^n \equiv 0 \; \mod \; \I{\infty}$.   Thus $[\alpha^n], [\beta^n] \in \bar{H}^1$.  However, because $\wt(a^n)=d$ is odd, $\wt(\alpha^n)=d+1$ is even.  Similarly $\wt(\beta^n)=d+5$ is also even.  The vanishing result Thm.~\ref{thm:MainTheorem} implies that there are no conservation laws of even weighted degree.  Thus there exist unique weighted-homogeneous $b^n,t^n:\M{\infty} \to \C$ with $\wt(b^n)=d+1$ and $\wt(t^n)=d+5$ such that $\ed b^n \equiv \alpha^n$ mod $\I{\infty}$ and $\ed t^n \equiv \beta^n$ mod $\I{\infty}$.
\end{proof}

\begin{proof}[Proof of Lem.~\ref{lem:ainkernel}]
Suppose $a^n \in \ker(\mce)$ and that $b^n,f^n,r^n,s^n,t^n,a^{n+1}=\mcp(a^n)$ are defined by the recursion.  A direct computation of $a^{n+1}_{-1,\ol{-1}}$ involves a rather large number of terms.  To cut down on the algebraic complexity, we will instead derive equations satisfied by $s^n$.  Using these one may compute that $a^{n+1} \in\ker(\mce)$.  The first goal is to show that 
\begin{equation}
s^n_{\ol{-1}}=-u_{\ol{0}}s^n+3\mo b^n_{-1,-1}-3\mo \left(u_1+u_0^2 \right)b^n.\label{eq:sn-1}
\end{equation} 
This can be obtained as follows.  The condition $\ed b^n \equiv \alpha^n$ allows us to express $b^n_{\ol{-1}}$ and $a^n_{-1,-1}$ in terms of $a^n,a^n_{-1},b^n_{-1}$.  Then by repeatedly applying $\ed^2=0$ one obtains the expressions:
\begin{align*}
b^n_{-1,\ol{-1}}&=-\frac{3}{\sqrt{2}}\mo e^u \left( u_0 a^n +a^n_{-1}\right)\\
b^n_{-1,{-1},\ol{-1}}&=-\frac{3}{\sqrt{2}}\mo e^u \left( u_1+u_0^2 \right)a^n -3e^u b^n_{-1}\\
b^n_{-1,{-1},{-1},\ol{-1}}&= (u_0^2+u_1)b^n_{-1}+\left(  u_2+2u_0 u_1   \right)b^n +\mo e^u s^n.
\end{align*}
Then Eqs.~\eqref{eq:fn}-~\eqref{eq:sn} can be solved for $b^n_{-1,-1,-1}$ to obtain
\begin{align*}
b^n_{-1,{-1},{-1}}&= (u_0^2+u_1)b^n_{-1}+\left(  u_2+2u_0 u_1   \right)b^n +\mo e^u s^n.
\end{align*}
From this and the expressions for the derivatives of $b^n$ one can derive Eq.~\eqref{eq:sn-1}.  Now applying $\ed^2=0$ consecutively to $s^n,s^n_{-1},s^n_{-1,-1}$ one obtains the relations:
\begin{align*}
s^n_{-1,\ol{-1}}&=-u_{\ol{0}}s^n_{-1}-(2e^u+e^{-2u})s^n\\
s^n_{-1,{-1},\ol{-1}}&=-\ub_0 s^n_{-1,-1}-(e^u+2e^{-2u})s^n_{-1}+2u_0(-e^u+e^{-2u})s^n\\
s^n_{-1,{-1},{-1},\ol{-1}}&=-\ub_0 s^n_{-1,-1,-1}-3e^{-2u}s^n_{-1,-1}+u_0(6e^{-2u}-3e^u)s^n_{-1}\\
&+\left( 2u_1 (-e^u+e^{-2u})-2u_o^2(e^u+2e^{-2u})\right)s^n.
\end{align*}
The recursion provides expressions for $\ed t^n$ in terms of $s^n$ and its derivatives.  Using the expressions just obtained for the derivatives of $s^n$ along with the definition of $a^{n+1}$ in terms of $t^n$ allows one to check that $a^{n+1} \in\ker(\mce)$.   We leave the details to the reader. 
\end{proof}

\begin{proof}[Proof of Lem.~\ref{lem:btexist}]
First assume that $a^n \in V_d$ with $d>0$ so that $b^n \in V_{d+1}$.  We will prove that $b^n \in \langle u_j \rangle$.  By Lemma 8.13 in \cite{Fox2011}, if $a^n \in V_{d}$ with $d>0$ then $a^n=u_{d-1}+\cdots \in \langle u_j \rangle$.  First we show that $b^n_z=b^n_{\bar{z}}=0$.  Differentiating Eq.~\eqref{eq:alpha} with respect to $z$ results in 
\[
\ed \left( \dd{b^n}{z} \right) \in \I{\infty},
\]
using $\dd{\eta_i}{z}=\dd{\eta_i}{\bar{z}}=0$.  But there are no exact forms in the ideal and so $\dd{{b}^n}{z}$ is constant.  Together with a similar argument involving $\bar{z}$ this implies that ${b}^n$ is at most linear in $z$ or $\bar{z}$ so we can write ${b}^n=b_0^n+k_1 z + k_2 \bar{z}$ where $b_0^n$ is independent of $z$ and $\bar{z}$.  However, because the right hand side of Equation~\eqref{eq:beta} does not contain any terms of the form $k_1 \ed z$ or $k_2 \ed \bar{z}$, we must have $k_1=k_2=0$.  Thus ${b}^n$ is independent of $z$ and $\bar{z}$.  

Now write $b^n=\sum_{l=0}^{\infty}b^{n,l}{\ub}_{2n}^l$ where $b^{n,l}$ is independent of ${\ub}_{2n}$ as well as $\ub_{2n}$, the variable of least (`most negative') weighted degree on which $b^n$ depends.   The $\del$ part of $\ed b^n \equiv \alpha^n$ can be written as
\[
\sum_{l=0}^{\infty}\left(  b^{n,l}l {\ub}_{2n}^{l-1} {\ub}_{2n+1}+b^{n,l}_{{-1}} {\ub}_{2n}^l \right)=\frac{\mo}{\sqrt{2}}\left( a^n_{-1,-1} + 2 u_0 a^n_{-1}\right). 
\]
The lowest variable appearing in $b^n$ is ${\ub}_{2n}$ and it does not appear in $a^n,a^n_{-1},a^n_{-1,-1}$ because  $a^n, a^n_{-1},a^n_{-1,-1} \in  \langle u_j \rangle$.  By induction we find that $b^n$ is independent of ${\ub}_{2n}$.  Then using induction again we find that $b^n$ is independent of ${\ub}_j$ for all $j \geq 0$.   Thus $b^n \in  \langle u,u_j \rangle$.  Expanding $b^n$ as a power series in $u$ and considering the $\delb$-components of $\ed b^n \equiv \alpha^n$ shows that $b^n \in  \langle u_i \rangle$.  (The appearance of $e^{u}$ on the right hand side does not spoil the argument because it is the image of the $\bar{u}_0 \dd{\;}{u}$ term in $e_{\overline{-1}}$ that leads to the vanishing of $b^n_u$.)   This completes the proof of the first part.  

The proof of the second part is similar by slightly more involved.  So suppose that $a^n \in V_{d}$ for $d < -6$.  By Lemma 8.13 in \cite{Fox2011}, $a^n=\ub_{d-1}+\cdots \in \langle \ub_j \rangle$.  We now use this to argue that $b^n=e^{u}\bh^n$ where $\bh^n \in \langle \ub_j \rangle$ is a weighted homogenous polynomial of degree $\wt(a^n)+1$. 

The same argument as used above shows that  $b^n_z=b^n_{\bar{z}}=0$.   Now write $b^n=\sum_{l=0}^{\infty}b^{n,l}{u}_{2n}^l$ where $b^{n,l}$ is independent of ${u}_{2n}$ and $u_{2n}$ is the variable of highest weighted degree on which $b^n$ depends.  We see that $a^n \in  \langle \ub_j \rangle$ implies that $a^n_{-1} \in  \langle u,\ub_j \rangle$ and $a^n_{-1,-1} \in  \langle \ub_j,u,u_0 \rangle$.  The $\del$ part of $\ed b^n \equiv \alpha^n$ can be written as
\[
\sum_{l=0}^{\infty}\left(  b^{n,l}l {u}_{2n}^{l-1} {u}_{2n+1}+b^{n,l}_{{-1}} {u}_{2n}^l \right)=\frac{\mo}{\sqrt{2}}\left( a^n_{-1,-1} + 2 u_0 a^n_{-1}\right). 
\]
The highest variable appearing in $b^n$ is ${u}_{2n}$ and it does not appear in $a^n,a^n_{-1},a^n_{-1,-1}$.  By induction we find that $b^n$ is independent of ${u}_{2n}$.  Then using induction again we find that $b^n$ is independent of ${u}_j$ for all $j \geq 0$.   Thus $b^n \in  \langle u,\ub_j \rangle$.  

We can now determine the explicit dependence of $b^n$ on $u$.  The $\delb$ component of Eq.~\eqref{eq:beta} is 
\be
b^n_{\ol{-1}}= \frac{3\mo}{\sqrt{2}} e^{u}a^n.
\ee
Using the identity  $[\frac{\del}{\del u}, e_{\ol{-1}}]b^n=0$, which follows from the fact that $b^n \in \langle u,\ub_j \rangle$, we compute
\begin{align*}
(b^n_u)_{\ol{-1}}=\frac{\del}{\del u} b^n_{\ol{-1}}=\frac{\del}{\del u} (  \frac{3\mo}{\sqrt{2}} e^{u} a^n)=\frac{3\mo}{\sqrt{2}} e^{u} a^n= b^n_{\ol{-1}}.
\end{align*}
Thus we have $e_{\ol{-1}}(b^n_u-b^n)=0$, which implies that, up to an additive constant, $b^n=e^{u}\bh^n$ with $\bh^n \in \langle \ub_j \rangle$, as desired. Requiring that $b^n$ be weighted homogeneous fixes the constant to be zero.

The proof of the statements regarding $t^n$ follow parallel reasoning and so are omitted.
\end{proof}

\begin{proof}[Proof of Lem.~\ref{lem:nou}]
Suppose that $u_p$ is the variable of highest weighted degree in $a^n$.  Suppose that $\ub_q$ is the variable of lowest weighted degree that occurs with the term containing the highest power of $u_p$.  Then $e_{\ol{-1}}e_{-1}$ on this term will produce a term with factor $u_{p+1}\ub_{q+1}$ multiplying the maximal possible powers of $u_p$ and $\ub_q$.  In the equation $\mcd(a^n)=0$, no other term will cancel this by the assumption that we chose a term with the maximal exponents.  By induction we find that $a^n_{u_i \ub_j}=0$.  It is impossible for $a^n$ to be homogeneous, to involve both $u_j$ and $\ub_j$, and to satisfy $a^n_{u_i \ub_j}=0$.  Thus $a^n \in  \langle u_j \rangle$ if $\wt(a^n)>0$ and $a^n \in  \langle \ol{u_j} \rangle$ if $\wt(a^n)<0$.

We will now argue that if $a^{n} \in \langle u, \ub_j \rangle $ and $\mcd(a^n)=0$ then in fact $a^n \in  \langle \ub_j \rangle$. The proof for the positive degree situation is analogous and so it is omitted.   We calculate that 
\begin{align*}
a^{n+1}_{-1,\ol{-1}}&=u_0 \sum_{i=1}^n k_le^{k_l u } \left[k_l \ol{u_0} A^l + A^l_{\ol{-1}} \right]+\cdots
\end{align*}
where we have shown only the terms involving $u_0$.  Thus the condition $\mcd(a^n)=0$ implies that 
\be\label{eq:}
k_le^{k_l u } \left(k_l \ol{u_0} A^l + A^l_{\ol{-1}} \right)=0
\ee 
for each $l=1, \ldots, n$.  However, as $A^l \in  \langle \ol{u_j} \rangle$ is a polynomial, it follows that $A^l=0$ if $k_l \neq 0$.  Thus $a^n=A^0$ is independent of $u$ and $a^n \in \langle \ol{u_j} \rangle$.
\end{proof}

The proof for $\mcn$ follows a parallel line of reasoning and so it is omitted.  We mention that whereas it is easier to prove properties of $\mcp$ for $V_d$ with $d>0$, it is easier to prove properties of $\mcn$ for $V_d$ with $d<0$.

It should be possible to generalize the arguments in this section to obtain a complete description of the characteristic cohomology of all Toda field equations.

%%%%%%%%%%%%%%%%%%%%%%%%%%%%%%%%%%%%%%%%%%%%%%%%%%%%%%%%%%%%%%%%%%%

\section{Finite-type solutions}
Pinkall and Sterling used the notion of finite-type solutions to reduce the PDE $u_{z \zb}=-\frac{1}{2}\sinh(2u)$ to a system of ODE on a finite dimensional manifold.  This was re-interpretted by Burstall et al.\ in \cite{Burstall1993}, using Killing fields, for all harmonic maps into symmetric spaces.  We give a definition of finite-type solutions in terms of the characteristic cohomology that recaptures the notion in Pinkall and Sterling's work and should also be equivalent to that of Burstall et al.\ \cite{Burstall1993}.    

Here is a general definition of finite-type solutions in terms of conservation laws.  
\begin{definition}\label{defn:generalfinitetype}
Let $(\M{\infty},\I{\infty})$ be the infinite prolongation of an involutive EDS with an infinite set of conservation laws $\{ [\p_1], [\p_2], [\p_3], \ldots \}$ and normal forms $\p_i \in \Omega^1(\M{\infty},\R)$.  Then an integral manifold $\iota:N \to M$ is of {\bf finite-type} if for all $m>n$ there exist $a_i \in \R$ such that 
\begin{equation}
\iota^*(\p_{m}-\sum_{i=1}^n a_i \p_i)=0.
\end{equation}  
\end{definition}

This definition applies to the nonlinear Poisson equations (Toda field equations) studied here and in \cite{Fox2011}.  For a given nonlinear Poisson equation, let $P^i=u_{2i-2}+ \cdots$ be a basis for $V_{2i-1}$ when $V_{2i-1} \neq 0$, for $i=1,2,\ldots$  Let $\pt_i=qP^i_{-1}\zeta+q_{\ol{-1}}P^i\zetab$ for $i \in \N$.  The relationship 
\[
\pt_i=(2i-1)\p_i+\frac{1}{2}\ed (Pq)
\]
and Eq.~\eqref{eq:clbasis} (see \cite{Fox2011} for the proof) imply that the $[\pt_i]$ form a basis for the conservation laws of Eq.~\eqref{eq:EDS}.  Using the isomorphisms $\bar{H}^1\cong \mcc \cong V$, $\pt_i$ corresponds to $P^i \in V_{2i-1}$ .  Let $\iota^{(0)}:N \to M$ be an integral manifold and $\iota:N \to \M{\infty}$ the prolonged map. Then Def.~\ref{defn:generalfinitetype} applies, though we restate it using the notation suited to this example.
\begin{definition}\label{defn:finitetype}
The integral submanifold $\iota^{(0)}:N \to M$ is of {\bf finite-type} $g \in \N$ if $g$ is the lowest natural number such that for all $g' >g$, $\iota^*(\pt_{g'})=\sum_{j=1}^{g}(a_j \iota^*(\pt_j)+b_j \iota^*(\ol{\pt}_j))$ for some complex numbers $a_j,b_j$. 
\end{definition}
The isomorphism $\ol{\mcc_{d}}=\mcc_{-d}$ corresponds to $\ol{V_{d}}=V_{-d}$, so that conservation laws of both positive and negative weighted-degree are coming into play.  

This definition is not quite satisfying as part of the theory of the characteristic cohomology.  The definition relies on choosing representatives of (characteristic) cohomology classes.  While this seems unavoidable, there is not yet a satisfying notion of normal form for elements of $\bar{H}^1$, but only for elements of $H^2(M,\mci)$.  While the normal form $\p_{P} \equiv \frac{\mo}{2d}J(P \ed q - q \ed P)$ (see \cite{Fox2011} Sec.~7) that is derived from the normal form $\Phi_P \in \mcc \cong H^2(M,\mci)$ is  a candidate, it is desirable to have a definition of the normal form that is independent of $\Phi_P \in \mcc$.  The normal form $\p_P$ is also undesirable in that it will not descend to a torus domain for doubly periodic solutions.  In Sec.~\ref{sec:translationinvariant} we introduce a translation invariant representative $\ph_P$, though the definition is rather patchwork.     

Pinkall and Sterling made the following definition, which we state using the notation of this article, including the notation presented immediately prior to Defn.~\ref{defn:finitetype}.
\begin{definition}\label{defn:finitetypePS}
The integral submanifold $\iota^{(0)}:N \to M$ is of {\bf finite-type} $g \in \N$ if $g$ is the lowest natural number such that for all $g' >g$, $\iota^*(P^{g'})=\sum_{j=1}^{g}(a_j \iota^*(P^j)+b_j \iota^*(\ol{P}^j))$ for some complex numbers $a_j,b_j$. 
\end{definition}
It is easy to see that  
\begin{lemma}
Definitions~\ref{defn:finitetype} and~\ref{defn:finitetypePS} agree.
\end{lemma}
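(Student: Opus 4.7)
The plan is to use the explicit representative
\[
\pt_i = q\,P^i_{-1}\,\zeta + q_{\ol{-1}}\,P^i\,\zetab
\]
displayed just above Defn.~\ref{defn:finitetype}. Since the contact ideal $\I{\infty}$ vanishes on any integral manifold and $e_{-1}, e_{\ol{-1}}$ descend to $\del/\del z, \del/\del\zb$ under $\iota^*$, we obtain
\[
\iota^*\pt_i = (\iota^*q)\,(\iota^*P^i)_{z}\,\ed z + (\iota^*q)_{\zb}\,(\iota^*P^i)\,\ed\zb
\]
and the analogous formula
\[
\iota^*\ol{\pt_i} = -(\iota^*q)_{z}\,(\iota^*\ol{P^i})\,\ed z - (\iota^*q)\,(\iota^*\ol{P^i})_{\zb}\,\ed\zb.
\]
In particular $\iota^*\pt_i$ depends $\C$-linearly on the single function $\iota^*P^i$, and $\iota^*\ol{\pt_i}$ on $\iota^*\ol{P^i}$.

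For Defn.~\ref{defn:finitetypePS}~$\Rightarrow$~Defn.~\ref{defn:finitetype}, I substitute the PS relation $\iota^*P^{g'}=\sum_j a_j\iota^*P^j+\sum_j b_j\iota^*\ol{P^j}$ into the displayed formula for $\iota^*\pt_{g'}$ and invoke $\C$-linearity. The $a_j$-terms reproduce $\sum_j a_j\iota^*\pt_j$ identically, while the $b_j$-terms yield a 1-form that differs from $\sum_j b_j\iota^*\ol{\pt_j}$ by the exact correction $\ed\bigl((\iota^*q)\sum_j b_j\iota^*\ol{P^j}\bigr)$, a consequence of the asymmetry of the defining formula $\pt^P=qP_{-1}\zeta+q_{\ol{-1}}P\zetab$ under conjugation. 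Absorbing this exact form into the choice of representatives of $\bar{H}^{1}_{\C}$ gives the relation required by Defn.~\ref{defn:finitetype}. For the converse, I compare the $\ed z$ and $\ed\zb$ components of the assumed relation; the $\ed\zb$-component reads
\[
(\iota^*q)_{\zb}\!\left(\iota^*P^{g'}-\sum_j a_j\iota^*P^j\right)=-\sum_j b_j(\iota^*q)\,(\iota^*\ol{P^j})_{\zb},
\]
and together with the analogous $\ed z$-equation, combined with the linearized Tzitzeica equation $\mce(\iota^*P^i)=0$ satisfied by each pullback, this over-determined system integrates (away from the zeros of $\iota^*q$) to the required PS-type identity $\iota^*P^{g'}=\sum_j a_j\iota^*P^j+\sum_j b'_j\iota^*\ol{P^j}$ for suitable $b'_j$.

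The main obstacle is the careful bookkeeping of the exact-form discrepancy $\iota^*\pt^{\ol{P^j}}-\iota^*\ol{\pt_j}=\ed\bigl((\iota^*q)\iota^*\ol{P^j}\bigr)$ throughout both directions. One must verify that reabsorbing these exact terms changes neither the span of pullbacks in $\bar{H}^{1}_{\C}$ nor the minimal integer $g$ on either side --- this coincidence of the minimal $g$'s is the substantive content of the lemma.
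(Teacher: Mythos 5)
Your overall strategy is the same as the paper's: work with the explicit representative $\pt_i=qP^i_{-1}\zeta+q_{\ol{-1}}P^i\zetab$, use the linearity of $P\mapsto\pt_P$ for one direction, and compare the $\ed z$ and $\ed\zb$ coefficients for the other. The paper disposes of the lemma in two sentences on exactly these grounds, implicitly reading $\ol{\pt}_j$ as the representative $\pt_{\ol{P}^j}$ attached to the generating function $\ol{P}^j$, so that the correspondence $P\mapsto\pt_P$ is honestly $\C$-linear on the whole list $\{P^j,\ol{P}^j\}$: the forward implication is then immediate, and the converse follows by reading off the $\zetab$-coefficient $q_{\ol{-1}}P$ of the relation and cancelling the factor $q_{\ol{-1}}$, which does not vanish identically.

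As written, however, your proof has a genuine gap, and you concede as much in your final paragraph. You take $\ol{\pt}_j$ to be the literal complex conjugate of the form $\pt_j$, correctly compute the discrepancy $\iota^*\pt_{\ol{P}^j}-\iota^*\ol{\pt_j}=\ed\bigl((\iota^*q)\,\iota^*\ol{P}^j\bigr)$, and then propose to ``absorb'' this exact form into the choice of representatives. Definition~\ref{defn:finitetype} does not permit that: it fixes the representatives $\pt_j,\ol{\pt}_j$ and demands equality of one-forms on $N$, not of classes in $\bar{H}^1_{\C}$ or in $H^1_{dR}(N)$, and the correction $\ed\bigl(q\sum_j b_j\ol{P}^j\bigr)$ is not generally zero on $N$. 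The converse is likewise not an argument: asserting that an ``over-determined system integrates (away from the zeros of $\iota^*q$)'' to the Pinkall--Sterling identity states the conclusion rather than deriving it, and the zeros of $q=zu_0-\zb\ub_0$ are unavoidable. Since you explicitly leave unverified the claim that these exact corrections change neither the spans nor the minimal $g$ --- which you yourself call ``the substantive content of the lemma'' --- the statement is not actually proved. The repair is either to interpret $\ol{\pt}_j$ as $\pt_{\ol{P}^j}$, after which both directions follow at once from linearity and the $\zetab$-coefficient as in the paper, or to carry the exact discrepancies through both implications and show explicitly that they force no change in $g$.
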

\begin{proof}
The form $\pt^i$ is linear in $P^i$.  Thus any linear relations on the $P^i$ extend to linear relations on $\pt^i$. Conversely, by examining the $\zetab$ coefficient,  linear relations on the $\pt^i$ imply linear relations on the $P^i$.  
\end{proof}

As noted in \cite{Pinkall1989}, doubly periodic solutions, i.e. solutions $u:\C \to \R$ that descend to a torus $T^2$, are always of finite-type.   This follows from the following two facts: 1)Linear elliptic operators on compact manifolds have finite dimensional kernels and 2)The $P^i$ all pull back to be in the kernel of the linear elliptic operator, $\frac{\del^2}{\del z \del \zb}+f_u$ (See Eq.~\eqref{eq:ED}).  

\begin{example}
The simplest system with which to illustrate the implications of finite-type conditions is that of holomorphic curves in $\C^2$. For this system we need not resort to prolongation.  For a given holomorphic curve we may choose holomorphic coordinates $z,w$ on $\C^2$ so that almost everywhere the holomorphic curve can be graphed as $(z,s(z))$, where $s(z)$ is an analytic function.  The finite-type condition will imply that $s(z)$ satisfies a rational ODE.  We now explain this.

The EDS for holomorphic curves is $(M,\mci)$ with $M=\C^2$ and $\mci=\langle \Omega^{(2,0)}\oplus \Omega^{(0,2)} \rangle$.  The space of conservation laws is just the space of holomorphic $(1,0)$-forms plus the space of anti-holomorphic $(0,1)$-forms.   The forms $\{z^i w^j \ed z , z^iw^j \ed w\}$ and their complex conjugates are a basis for the space of polynomial conservation laws.  Define an integral manifold of finite-type to be a holomorphic curve $\gamma:N \to \C^2$ for which there exists a conservation law $\p=f(z,w)\ed z + g(z,w)\ed w$, with $f$ and $g$ polynomial, such that
\be\label{eq:holomorphicfinitetype}
\gamma^*(\p)=0.
\ee  
Near a smooth point we have that either $\gamma^*(\ed z) \neq 0$ or $\gamma^*(\ed w) \neq 0$.  Suppose that $\gamma^*(\ed z) \neq 0$ so that locally $w=s(z)$.  Then Eq.~\eqref{eq:holomorphicfinitetype} becomes
\be
(f+g\frac{ds}{dz})\ed z=0.
\ee
The PDE has been reduced to the ODE 
\be
\frac{ds}{dz}=-\frac{f(z,s(z))}{g(z,s(z))}
\ee
where $f$ and $g$ are polynomial.  Thus $s$ satisfies a rational ODE.  
\end{example}

A more powerful version of this is known for primitive maps to $k$-symmetric spaces.  If $\gamma:\R^2 \to \G/\K$ is a primitive map of finite-type then it can be solved by a pair of commuting ODE on a finite dimensional manifold \cite{Burstall1993}.  The result above, that holomorphic curves of finite-type are solutions of rational ODE, is a simple example of the elaborate theory used to reduce primitive maps of finite-type to an ODE.     

%%%%%%%%%%%%%%%%%%%%%%%%%%%%%%%%%%%%%%%%%%%%%%%%%%%%%%%%%%%%%

\section{Translation invariant conservation laws}\label{sec:translationinvariant}
Unfortunately, the presence of $z$ in $q$ prevents $\p_{q,P}$ from being translation invariant, impeding it from defining a cohomology class on a doubly periodic solution corresponding to CMC tori in a $3$-dimensional space form or special Legendrian tori in $\s{5}$.   We now remedy this situation by finding a translation invariant gauge.   Our starting point is Eq.~\eqref{eq:conservationlaw} for $\K \cong \SO(2)$,\footnote{The definition of $\p_P$ in (Eq. (19)) of \cite{Fox2011} is equivalent to $\frac{1}{d}\p_{q,P}$ in the notation Eq.~\eqref{eq:conservationlaw} if $\wt(P)=d$. }
\begin{equation}
\p_{P,Q}:=-\mo J (P\ed Q -Q \ed P).
\end{equation}
A short computation shows that 
\[
\p_{P,Q} \equiv -2 \left( Q \del P + P \delb Q\right) + \ed (QP) \; \mod \I{\infty}.
\]
Thus $[\p_{P,Q}]=[-2 \left( Q \del P + P \delb Q\right)]$ as elements of $\bar{H}^1$.  It will be convenient to compute with $\pt_{P,Q}:=Q \del P + P \delb Q$ which, up to scale, has the same class as $[\p_{P,Q}]$.  We calculate that
\be
\pt_{P,q}=z \pt_{P,u_0}- \zb\pt_{P,\ub_0}- \ub_0 P \zetab.
\ee
We would like to find $G: \M{\infty} \to \C$ such that 
\be\label{eq:phihat}
\ph=\pt_{P,q}-\ed G
\ee
is translation invariant, that is
$$\mcl_{\dd{\;}{z}}\ph=\mcl_{\dd{\;}{\zb}}\ph=0.$$

The weighted degrees of $\pt_{P,u_0}$ and $\pt_{P,\ub_0}$ are even, so they are trivial characteristic cohomology classes.   Therefore there exist functions $A,B \in C^{\infty}(\M{\infty},\C)$ such that 
\begin{align}
\pt_{P,u_0} & = \ed A +  \alpha  \label{eq:dA}\\
\pt_{P,\ub_0} & =  \ed B+ \beta   \nonumber
\end{align}
with $\alpha,\beta \in  \I{n}$.  

We calculate that
\be
\mcl_{\dd{\;}{z}} \p_{P,u_0} =0
\ee
implying that
\be
\ed (\dd{A}{z})+\mcl_{\dd{\;}{z}} \alpha=0.
\ee
We find that $\mcl_{\dd{\;}{z}} \alpha \in \I{n}$ since $\mcl_{\dd{\;}{z}} \eta_i=0$ for all $i$, and so conclude that 
\be
\ed (\dd{A}{z}) \in \I{n}.
\ee
However, $\I{n}$ has trivial infinite derived system, so it must be that $\ed (\dd{A}{z})=0$.  Similarly we conclude that $\ed (\dd{A}{\zb})=0$ and thus 
$$A=\hat A+a_1z +a_2 \zb$$
where $\hat A$ is a function independent of  $z,\zb$ and $a_1,a_2 \in \C$.  The same argument works to derive the corresponding result for $B$.   Eqn.~\eqref{eq:dA} now implies that $a_1=a_2=0$ because neither $\p_{u_0,P}$ nor $\alpha$ have any terms of the form $a_1 \ed z+a_2 \ed z$ for $a_1,a_2 \in \C$.  Thus $A$ is independent of $z,\zb$.  The same argument applies to $B$. 

Let $G=z A - \zb B$ in Eq.~\eqref{eq:phihat}.  Then
\be
\ph \equiv -A \ed z+(B - \ub_0 P) \zetab \;\; {\rm mod} \; \I{\infty}.
\ee
As an example, if one takes $P=u_0$ in the case that $f_{uu}=\alpha f_u + 2 \alpha^2 f$ (the Tzitzeica equation) then 
\[
\ph \equiv -\frac{1}{2}u_0^2 \zeta -\left( 2|u_0|^2+\beta^{-1}(f_u-\alpha f) \right) \ed \zetab 
\]
is a translation invariant representative.

This discussion proves
\begin{lemma}  
Let $(M,\mci)$ be a nonlinear Poisson system with $\dim(\bar{H}^1) = \infty$ (but $f$ does not satisfy any first-order ODE).  Let $\iota^{(0)}:N \to M$ be an integral manifold and $\iota:N \to \M{\infty}$ the prolonged map.  Suppose that $\iota$ is doubly periodic with respect to the lattice $\Lambda \subset \C$.  Then $\iota$ induces a map on cohomology
\be
\iota^*:\bar{H}^1 \to H^1_{dR}(\C/\Lambda,\C).
\ee
\end{lemma}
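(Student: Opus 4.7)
The plan is to exploit the translation-invariant representatives constructed immediately before the lemma by defining the pullback map on a basis of $\bar{H}^1_\C$ and then verifying that everything descends to the torus.

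First I would invoke Eq.~\eqref{eq:clbasis} to fix the basis $\bar{H}^1_\C = \C\cdot\{[\p_0],[\p_i],[\ol{\p}_i] : i\geq 1\}$. For each $i \geq 1$ the preceding discussion has already produced a translation-invariant representative $\ph_i$ of $[\p_i]$ of the shape
\[
\ph_i \equiv -A^i\,\ed z + (B^i - \ub_0 P^i)\,\zetab \pmod{\mci^{(\infty)}},
\]
where $P^i \in V_{2i-1}$ generates the class and $A^i,B^i$ are the weighted-homogeneous antiderivatives (modulo $\mci^{(\infty)}$) of $\pt_{P^i,u_0}$ and $\pt_{P^i,\ub_0}$. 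Because $\wt(A^i) = \wt(B^i) = 2i \neq 0$, the constant-of-integration ambiguity in the derived-system argument is killed, so $\ph_i$ is unique modulo $\mci^{(\infty)}$. Complex conjugation then supplies canonical translation-invariant representatives for $[\ol{\p}_i]$. For the residual class $[\p_0]$, which corresponds to $q\in V_0$ and degenerates the $\pt_{P,q}-\ed G$ construction, I would invoke the classical conservation law attached to the $z$-translation symmetry of Eq.~\eqref{eq:fGordon} to exhibit a translation-invariant $\ph_0$ directly.

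Next I would define $\iota^*\colon \bar{H}^1_\C \to H^1_{dR}(\C/\Lambda,\C)$ on basis classes by $\iota^*[\p_i] := [\iota^*\ph_i]$ and extend $\C$-linearly. Closedness of $\iota^*\ph_i$ on $\C$ is immediate from $\ed \ph_i \in \mci^{(\infty)}$ together with $\iota^*\mci^{(\infty)} = 0$. Translation invariance of $\ph_i$ on $\M{\infty}$, combined with the hypothesis that $\iota$ factors as $\C \to \C/\Lambda \to \M{\infty}$, shows that $\iota^*\ph_i$ is $\Lambda$-periodic on $\C$, so it descends to a closed $1$-form on $\C/\Lambda$ representing the desired class. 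Well-definedness then follows because the canonicality established in Step~1 forces any two translation-invariant representatives of $[\p_i]$ to agree modulo $\mci^{(\infty)}$, which is annihilated by $\iota^*$.

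The hard part will be pinning down the constants of integration in Step~1. Without the weighted-degree trick, two translation-invariant representatives of the same class could differ by $c_1\,\ed z + c_2\,\ed\zb$, which descends to a \emph{nontrivial} harmonic $1$-form on $\C/\Lambda$ and would break well-definedness. The weighted-degree obstruction rules this out for every basis class of nonzero weighted degree; the class $[\p_0]$ is the single weighted-degree-zero case where the argument must be replaced by a direct Noether-type computation against the translation symmetry of Eq.~\eqref{eq:fGordon}.
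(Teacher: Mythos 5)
Your overall strategy coincides with the paper's: the lemma is proved there simply by the construction, in the immediately preceding discussion, of the translation-invariant representatives $\ph=\pt_{P,q}-\ed G$ with $G=zA-\zb B$, together with the observation that a one-form that is closed modulo $\mci^{(\infty)}$ and translation invariant pulls back under a doubly periodic $\iota$ to a closed, $\Lambda$-periodic form on $\C$ and hence descends to $\C/\Lambda$. On everything the paper actually proves, your proposal is a faithful reproduction of its argument.

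The problem lies in the step you yourself single out as the hard one. The claim $\wt(A^i)=\wt(B^i)=2i\neq 0$ is not correct: since $\wt(P^i)=2i-1$ and $\wt(\ub_0)=-1$, one has $\wt(\pt_{P^i,\ub_0})=2i-2$, so $\wt(B^i)=2i-2$, which vanishes when $i=1$. In that case the additive constant in $B^1$ (the only ambiguity left by the trivial-derived-system argument) is itself weighted-homogeneous of degree $0$ and is not removed by homogeneity; replacing $B^1$ by $B^1+c$ replaces $\ph_1$ by $\ph_1+c\,\ed\zb$, and $c\,\ed\zb$ is weighted-homogeneous of degree $+1=\wt(\p_1)$, so imposing homogeneity on $\ph_1$ does not exclude it either, while $[\ed\zb]$ is a nontrivial class in $H^1_{dR}(\C/\Lambda,\C)$. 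Thus the canonicity you need fails at exactly $i=1$ under your normalization (for $i\geq 2$ the argument goes through once the degree of $B^i$ is corrected, since then $2i-2\neq 0$). Separately, your treatment of $[\p_0]$ targets the wrong symmetry: $[\p_0]$ corresponds to $q=zu_0-\zb\ub_0\in V_0$, the rotation conservation law, whereas translation corresponds to $u_0\in V_1$, i.e.\ to $[\p_1]$, so a Noether-type computation against $z$-translation does not produce a representative for $[\p_0]$. In fairness, the paper itself never addresses well-definedness of the induced map --- it only exhibits a translation-invariant representative for each class and concedes the construction is ``rather patchwork'' --- so your proposal does not fall short of the paper's own proof; but the canonicity claim as you state it is not established.
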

The meaning of these cohomology classes seems to be completely unexplored.
%%%%%%%%%%%%%%%%%%%%%%%%%%%%%%%%%%%%%%%%%%%%%%%%%%%%%%%%%%%%% 
\bibliographystyle{amsplain}
\bibliography{bibliography}
\end{document}